\def\figurename{Figure} 
\renewcommand{\fnum@figure}[1]{\figurename~\thefigure.}
\def\tablename{Table} 
\renewcommand{\fnum@table}[1]{\tablename~\thetable.}
\newtheorem{theorem}{Theorem}[section]
\newtheorem{corollary}[theorem]{Corollary}
\newtheorem{proposition}[theorem]{Proposition}
\theoremstyle{example}
\newtheorem{example}[theorem]{Example}
\theoremstyle{definition}
\newtheorem{definition}[theorem]{Definition}
\theoremstyle{remark}
\newtheorem{remark}[theorem]{Remark}
\numberwithin{equation}{section}
\begin{document}

\title{\bfseries\scshape{Classification, Derivations and Centroids of Low-Dimensional  Complex BiHom-Trialgebras}}

\author{\bfseries\scshape Ahmed Zahari\thanks{e-mail address: zaharymaths@gmail.com}\\
Universit\'{e} de Haute Alsace,\\
 IRIMAS-D\'{e}partement de Math\'{e}matiques,\\
 18, rue des Fr\`eres Lumi\`ere F-68093 Mulhouse, France.\\
\bfseries\scshape Bouzid Mosbahi\thanks{e-mail address: mosbahibouzid@yahoo.fr}\\
 University of Sfax, Faculty of Sciences of Sfax, BP 1171, 3000 Sfax, Tunisia.\\
 \bfseries\scshape Imed Basdouri\thanks{e-mail address: basdourimed@yahoo.fr}\\
 University of Gafsa, Faculty of Sciences of Gafsa, 2112 Gafsa, Tunisia.
}

\date{}
\maketitle


\noindent\hrulefill

\noindent {\bf Abstract.}\,The basic objective of this research work is to investigate the stucture of BiHom-associative trialgebras.\,In this respect we build up one important class of BiHom-trialgebras and determine properties of right, left and middle operations in BiHom-associative trialgebras.\,We provide a classification of $n$-dimensional BiHom-trialgebras for $n\leq3$.\,Relying upon the classification result of BiHom-associative trialgebras, the derivations and centroids of low-dimensional BiHom-associative trialgebras are characterized.\,Certain properties of the centroids in light of BiHom-associative trialgebras are reviwed the centroids of low-dimensional BiHom-associative trialgebras are computed.

\noindent \hrulefill

\vspace{.3in}

 \noindent {\bf AMS Subject Classification :} 17A30, 17A32, 16D20, 16W25, 17B63 .

\vspace{.08in} \noindent \textbf{Keywords}:
 BiHom-associative trialgebra, Classification, Centroid, Derivation, Central derivation.
\vspace{.3in}
\vspace{.2in}

\pagestyle{fancy} \fancyhead{} \fancyhead[EC]{ }
\fancyhead[EL,OR]{\thepage} \fancyhead[OC]{Ahmed Zahari, Bouzid Mosbahi, Imed Basdouri} \fancyfoot{}
\renewcommand\headrulewidth{0.5pt}

\section{Introduction}
A  BiHom-associative trialgebra $(\mathcal{A}, \dashv, \vdash,\bot ,\alpha, \beta)$ involves a vector space, three multiplications and two linear self maps.
It may be regarded as a deformation of an associative algebras, where the associativity condition is twisted by a linear map $\alpha$ and $\beta$, such that when $\alpha=id$ (rep. $\beta=id$), the BiHom-associative trialgebra degenerate to exactly BiHom-triassociative algebras. In the current work, our central focus is upon the structure of BiHom-associative trialgebras. Let $\mathcal{A}$ be an $n$-dimensional
$\mathbb{F}$-linear space and let  $\left\{e_1, e_2, \cdots, e_n\right\}$ be a basis of $\mathcal{A}$. A BiHom-associative trialgebra structure on $\mathcal{A}$ with products $\mu,\, \lambda$ and $\xi$ being
determined by $3n^3$ structure constants $\mathcal{\gamma}_{ij}^k,\, \mathcal{\delta}_{ij}^k$  and $\mathcal{\xi}_{ij}^k$,  are
$e_i\dashv e_j=\sum_{k=1}^n\gamma_{ij}^ke_k,\quad e_i\vdash e_j=\sum_{k=1}^n\delta_{ij}^ke_k$ and  $e_i\bot e_j=\sum_{k=1}^n\xi_{ij}^ke_k$. They are also determined by  $\alpha$ (resp. $\beta$)  which is provided by
${2n^2}$ structure constants $a_{ij}$ (resp. $b_{ij}$), where $\alpha(e_i)=\sum_{j=1}^na_{ji}e_j$
(resp. $\beta(e_j)=\sum_{k=1}^nb_{kj}e_k$). Requiring the algebra structure to be BiHom-associative trialgebra and unital  gives rise to a sub-variety of $\mathcal{BT}as$ of $\mathbb{F}^{3n^3+2n^2}$. Base changes
in $\mathcal{A}$ result in the natural transport of structure action of $GL_n(k)$ on $\mathcal{BT}as$. From this perspective, isomorphism classes of $n$-dimensional BiHom-algebras go in one-to-one correspondence with the orbits of the action of $GL_n(\mathbb{F})$ on $\mathcal{BT}as.$

Classification problems of the BiHom-associative trialgebras using the algebraic  and geometric technique have triggered the widest interest in the derivations and centroids of BiHom-associative trialgebras.
The BiHom-associative trialgebras introduced by Loday \cite{Ld} with a motivation to provide dual dialgebras, have been further investigated with  regard to several areas in mathematics
and physics. The classification of Hom-associative algebras was set forward by \cite{AZAB2} and  A. Zahari and I. Bakayoko who studied the classification of BiHom-associative dialgebras \cite{AI}.

This paper centers around examining the derivations and centroids of finite dimensional associative trialgebras. The algebra of derivations and centroids are highly needed and extremely useful in terms of algebraic and geometric classification problems of algebras.

The current paper is laid out as follows. In the fist section, we introduce the topic alongside with some previously obtained results.
The target of this paper is to introduce and classify derivation as well as centroids of BiHom-associative trialgebras.
In section 2, we explore the structure of BiHom-associative trialgebras.
In section 3, we display the algebraic varieties of BiHom-associative trialgebras, and we exhibit classifications, up to isomorphism, of two-dimensional and three-dimensional
BiHom-associative trialgebras.\\
In Section 4, we  identify the classification of the derivations. Eventually, in Section 5, we  present the classification of the centroids. In this case, the concepts of derivations and  centroids is largely inspired and whetted significant scientific concern from that of finite-dimensional algebras. The algebra of centroids plays an important role in the classification problems as well as in different applications
of algebras.In this study, we elaborated classification results of two and  three-dimensional BiHom-associative trialgebras. All considered algebras and vectors spaces  are supposed
over a field $\mathbb{F}$ of characteristic zero.

\newpage

\section{Structure of BiHom-associative trialgebras}
\begin{definition}\label{tia1}
A BiHom-associative trialgebra is a $6$-tuple $(\mathcal{A}, \dashv, \vdash,\bot ,\alpha, \beta)$ consisting of a  linear space $\mathcal{A}$,  linear maps
 $\dashv, \vdash, \bot : \mathcal{A}\otimes \mathcal{A} \longrightarrow \mathcal{A}$ and  $\alpha,\beta : \mathcal{A}\longrightarrow \mathcal{A}$ satisfying,
for all $x, y, z\in \mathcal{A}$, the following conditions :
\begin{eqnarray}
\alpha\circ\beta&=&\beta\circ\alpha,\label{eq1}\\
(x\dashv y)\dashv\beta(z)&=&\alpha(x)\dashv(y\dashv z)\label{eq6},\\
(x\dashv y)\dashv\beta(z)&=&\alpha(x)\dashv(y\vdash z)=\alpha(x)\dashv(y\bot z),\label{eq3}\\
(x\dashv y)\dashv\beta(z)&=&\alpha(x)\vdash(y\dashv z),\label{eq4}\\
(x\dashv y)\vdash\beta(z)&=&\alpha(x)\vdash(y\vdash z)=(x\bot y)\vdash\beta(z),\label{eq5}\\
(x\vdash y)\vdash\beta(z)&=&\alpha(x)\vdash(y\vdash z)\label{eq2},\\
(x\bot y)\dashv\beta(z)&=&\alpha(x)\bot(y\dashv z),\label{eq7}\\
(x\dashv y)\bot\beta(z)&=&\alpha(x)\bot(y\vdash z),\label{eq8}\\
(x\vdash y)\bot\beta(z)&=&\alpha(x)\vdash(y\bot z)\label{eq9},\\
(x\bot y)\bot\beta(z)&=&\alpha(x)\bot(y\bot z)\label{eq10},
\end{eqnarray}
for all $x, y, z\in \mathcal{A}.$ We call $\alpha$ and $\beta$ ( in this order ) the structure maps of $\mathcal{A}.$
\end{definition}
\begin{remark}\label{rq1}
In addition, $\alpha$ and $\beta$ are endomorphisms with respect to $\dashv, \vdash$ and $\bot$. Then,
 $\mathcal{A}$ is said to be a multiplicative BiHom-associative trialgebra
\begin{eqnarray}
\alpha(x\dashv y)=\alpha(x)\dashv\alpha(y)&;&\beta(x\dashv y)=\beta(x)\dashv\beta(y),\nonumber\\
\alpha(x\vdash y)=\alpha(x)\vdash\alpha(y)&;&\beta(x\vdash y)=\beta(x)\vdash\beta(y),\nonumber\\
\alpha(x\bot y)=\alpha(x)\bot\alpha(y)&;&\beta(x\bot y)=\beta(x)\bot\beta(y),\nonumber
\end{eqnarray}
for all  $x, y \in \mathcal{A}.$
\end{remark}
\begin{definition}
 A morphism of BiHom-associative trialgebra is a linear map
$$\psi : (\mathcal{A}, \dashv, \vdash,\bot, \alpha, \beta)\rightarrow(\mathcal{A}', \dashv',\vdash',\bot', \alpha', \beta')$$ such that
$$\alpha'\circ \psi
=\psi\circ\alpha,\;\; \beta'\circ \psi=\psi\circ\beta$$
and
\begin{eqnarray}
 \psi(x\dashv y)&=&\psi(x)\dashv'\psi(y),\nonumber\\
\psi(x\vdash y)&=&\psi(x)\vdash'\psi(y),\nonumber\\
\psi(x\bot y)&=&\psi(x)\bot'\psi(y),\nonumber
\end{eqnarray}
 for all  $x, y \in \mathcal{A}.$
\end{definition}

\begin{remark}
A bijective homomorphism is an isomorphism of $\mathcal{A}_1$ and $\mathcal{A}_2$.
\end{remark}

\begin{definition}
Let $(\mathcal{A}, \ast, \alpha, \beta)$ be a BiHom-associative trialgebra. If there is an associative trialgebra $(\mathcal{A}, \ast')$ such that
$x\ast'y=\alpha^{-1}(x)\ast\beta^{-1}(y),\, \forall x,y\in A$, we say that $(\mathcal{A}, \ast, \alpha,\beta)$ is of an associative type and
$({A},\ast')$ is its compatible associative algebra or the untwist of $(\mathcal{A}, \ast, \alpha, \beta)$.
\end{definition}

\begin{proposition}\label{p1}
Let $(\mathcal{A}, \dashv, \vdash,\bot ,\alpha, \beta)$ be an $n$-dimensional BiHom-associative trialgebra and let $\psi : \mathcal{A}\rightarrow \mathcal{A}$ be an invertible linear map.
Therefore, there is an isomorphism with an n-dimensional BiHom-associative trialgebra  $(\mathcal{A}, \dashv', \vdash', \bot', \psi\alpha\psi^{-1}, \psi\beta\psi^{-1})$, where
$(\dashv', \vdash', \bot')=\psi\circ(\dashv, \vdash, \bot)\circ(\psi^{-1}\otimes\psi^{-1}$). Furthermore, if $\left\{\phi^k_{ij}, \gamma^k_{ij}, \delta^k_{ij} \right\}$ are
the structure constants of $(\dashv, \vdash, \bot)$ with respect to the basis $\left\{e_1,\dots,e_n\right\}$,  then $(\dashv', \vdash', \bot')$ has the same structure constants
 with respect to the basis $\left\{\psi(e_1),\dots,\psi(e_n)\right\}$.
\end{proposition}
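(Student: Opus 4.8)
The plan is to read this as a transport-of-structure statement: the map $\psi$ is engineered to intertwine the two algebra structures, so the whole argument should reduce to the observation that conjugating by an invertible linear map carries each defining identity to itself. First I would fix notation by writing the transported operations explicitly as $x \dashv' y = \psi\bigl(\psi^{-1}(x)\dashv\psi^{-1}(y)\bigr)$, and likewise $x\vdash' y = \psi\bigl(\psi^{-1}(x)\vdash\psi^{-1}(y)\bigr)$ and $x\bot' y = \psi\bigl(\psi^{-1}(x)\bot\psi^{-1}(y)\bigr)$, together with $\alpha' = \psi\alpha\psi^{-1}$ and $\beta' = \psi\beta\psi^{-1}$. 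With these definitions the intertwining relations are immediate: for all $x,y\in\mathcal{A}$ one gets $\psi(x\dashv y)=\psi(x)\dashv'\psi(y)$ (and the analogous identities for $\vdash$ and $\bot$), while $\alpha'\circ\psi=\psi\circ\alpha$ and $\beta'\circ\psi=\psi\circ\beta$ hold after cancelling $\psi^{-1}\psi$. Thus $\psi$ is a morphism in the sense of the definition above, and being bijective it will be an isomorphism as soon as the target is shown to be a BiHom-associative trialgebra.

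The core step is verifying that $(\mathcal{A},\dashv',\vdash',\bot',\alpha',\beta')$ satisfies the axioms (2.1)--(2.10). Here I would use the bijectivity of $\psi$: given $x,y,z\in\mathcal{A}$, write $x=\psi(a)$, $y=\psi(b)$, $z=\psi(c)$ for the unique preimages $a=\psi^{-1}(x)$, etc. Each primed product or structure map then collapses, for instance $(x\dashv' y)\dashv'\beta'(z)=\psi\bigl((a\dashv b)\dashv\beta(c)\bigr)$ while $\alpha'(x)\dashv'(y\dashv' z)=\psi\bigl(\alpha(a)\dashv(b\dashv c)\bigr)$, so the primed instance of (2.2) is exactly the image under $\psi$ of the unprimed instance of (2.2). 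The same substitution turns every one of the ten identities into $\psi$ applied to its unprimed counterpart, and invoking the original axioms for $(\mathcal{A},\dashv,\vdash,\bot,\alpha,\beta)$ followed by injectivity of $\psi$ yields them all. The compatibility (2.1) for the new structure maps follows directly from $\alpha'\beta'=\psi\alpha\beta\psi^{-1}=\psi\beta\alpha\psi^{-1}=\beta'\alpha'$.

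Finally, for the structure-constant claim I would compute in the transported basis $f_i:=\psi(e_i)$. Using the intertwining relation, $f_i\dashv' f_j=\psi(e_i)\dashv'\psi(e_j)=\psi(e_i\dashv e_j)=\psi\bigl(\sum_{k}\phi_{ij}^k e_k\bigr)=\sum_{k}\phi_{ij}^k f_k$, so $\dashv'$ has structure constants $\phi_{ij}^k$ relative to $\{f_i\}$; the identical computation for $\vdash'$ and $\bot'$ recovers $\gamma_{ij}^k$ and $\delta_{ij}^k$, matching the unprimed data. I do not expect a genuine obstacle here, since the argument is entirely formal and the only real work is the bookkeeping of running the substitution $x=\psi(a)$, $y=\psi(b)$, $z=\psi(c)$ through all ten axioms. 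The one mild point to watch is keeping the asymmetric roles of $\alpha$ and $\beta$ straight when transporting the mixed identities (2.7)--(2.9), but each of these still reduces verbatim to its unprimed version under $\psi$.
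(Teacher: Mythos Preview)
Your proposal is correct and follows essentially the same transport-of-structure approach as the paper: verify the primed axioms by unwinding the definitions to the unprimed ones, check that $\psi$ intertwines the two structures, and compute the structure constants in the basis $\{\psi(e_i)\}$. Your substitution $x=\psi(a)$, $y=\psi(b)$, $z=\psi(c)$ is a slightly cleaner way of organizing the same computation that the paper carries out by expanding one representative axiom step by step; the paper also checks multiplicativity of $\alpha',\beta'$, which is not asserted in the proposition and which you rightly omit.
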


\begin{proof}
We prove, for any invertible linear map $\psi : \mathcal{A}\rightarrow \mathcal{A}, that \, (\mathcal{A}, \dashv', \vdash',  \bot',  \psi\alpha\psi^{-1}, \psi\beta\psi^{-1})$ is a
BiHom-associative trialgebra.
$$\begin{array}{ll}
(x\dashv'y)\dashv'\psi\beta\psi^{-1}(z)
&=\psi\dashv(\psi^{-1}\otimes\psi^{-1})(x,y)\dashv'\psi\beta\psi^{-1}(z)\\
&=(\psi(\psi^{-1}\otimes\psi^{-1})(x\dashv y))\dashv'\psi\beta\phi^{-1}(z)\\
&=\psi(\psi^{-1}\otimes\psi^{-1})(\psi(\psi^{-1}\otimes\psi^{-1})(x\dashv y)\dashv\phi^{-1}\beta\psi(z)\\
&=\psi(\psi^{-1}(x)\dashv\psi^{-1}(y))\dashv\psi^{-1}\beta\psi(z)\\
&=\psi(\alpha\psi^{-1}(x)\dashv(\psi^{-1}(y)\vdash\psi^{-1}(z)))\\
&=\psi(\psi^{-1}\otimes\psi^{-1})(\psi\otimes\psi)\alpha\psi^{-1}(x)\dashv(\psi^{-1}\otimes\psi^{-1}(y\vdash z))\\
&=\psi(\psi^{-1}\otimes\psi^{-1})(\psi\alpha\psi^{-1}(x)\dashv(\psi(\psi^{-1}\otimes\psi^{-1})(y\vdash z)))\\
&=\psi\alpha\psi^{-1}(x)\dashv'(y\vdash'z).
\end{array}$$
Hence, $(\mathcal{A}, \dashv', \vdash', \bot',  \psi\alpha\psi^{-1}, \psi\beta\psi^{-1})$ is a BiHom-associative trialgebra.\\
It is also multiplicative. Indeed, for $\alpha$
$$\begin{array}{ll}
\psi\alpha\psi^{-1}(x\dashv'y)
&=\psi\alpha\psi^{-1}\psi\dashv(\psi^{-1}\otimes\psi^{-1})(x,y)\\
&=\psi\alpha\dashv(\psi^{-1}\otimes\psi^{-1})(x,y)\\
&=\psi(\alpha\psi^{-1}(x)\dashv\alpha\psi^{-1}(y))\\
&=\psi((\psi^{-1}\otimes\psi^{-1})(\psi\otimes\phi)\alpha\psi^{-1}(x)\dashv\alpha\psi^{-1}(y))
=(\psi\alpha\psi^{-1}(x)\dashv'\psi\alpha\psi^{-1}(y)).
\end{array}$$

We have also for $\beta$
$$\begin{array}{ll}
\psi\beta\psi^{-1}(x\dashv'y)
&=\psi\beta\psi^{-1}\psi\dashv(\psi^{-1}\otimes\psi^{-1})(x,y)\\
&=\psi\beta\dashv(\psi^{-1}\otimes\psi^{-1})(x,y)\\
&=\psi(\beta\psi^{-1}(x)\dashv\beta\psi^{-1}(y))\\
&=\psi((\psi^{-1}\otimes\psi^{-1})(\psi\otimes\phi)\beta\psi^{-1}(x)\dashv\beta\psi^{-1}(y))
=(\psi\beta\psi^{-1}(x)\dashv'\psi\beta\psi^{-1}(y)).
\end{array}$$
Thus, $\psi : (\mathcal{A}, \dashv, \vdash, \bot,  \alpha, \beta)\rightarrow(\mathcal{A}, \dashv', \vdash', \bot',  \psi\alpha\psi^{-1}, \psi\beta\psi^{-1})$ is a
BiHom-associative trialgebra morphism, since\\  $\psi\circ\dashv=\psi\circ\dashv\circ(\psi^{-1}\otimes\psi^{-1})\circ(\psi\otimes\psi)=\mu'\circ(\psi\otimes\psi)$ and
$(\psi\alpha\phi^{-1})\circ\psi=\psi\circ\alpha$ and $(\psi\beta\psi^{-1})\circ\psi=\psi\circ\beta.$ \\ It is easy to infer that
$\left\{\psi(e_i), \cdots, \psi(e_n)\right\}$ is a basis of $\mathcal{A}$. For $i,j=1,\cdots,n$, we have

$\begin{array}{ll}
(\psi(e_i)\dashv\psi(e_j))
&=\psi(\psi^{-1}(e_i)\dashv \psi^{-1}(e_j))=\psi(e_i\dashv e_j)=\sum_{k=1}^n\gamma^k_{ij}\psi(e_k).
\end{array}$
\end{proof}

\begin{proposition}
Let $(\mathcal{A},  \dashv, \vdash, \bot, \alpha, \beta)$ be a BiHom-associative trialgebra over $\mathbb{F}$.\\ Let $(\mathcal{A}, \dashv', \vdash', \bot', \phi\alpha\psi^{-1}, \psi\beta\psi^{-1})$ be
its isomorphic BiHom-associative trialgebra described in Proposition \ref{p1}. If $\psi$ is an automorphism of $(\mathcal{A}, \dashv, \vdash, \bot,  \alpha, \beta)$, then
$\psi\phi\psi^{-1}$ is an automorphism of\\ $(\mathcal{A}, \dashv',\vdash', \bot',  \psi\alpha\psi^{-1}, \psi\beta\psi^{-1})$.
\end{proposition}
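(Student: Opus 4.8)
The plan is to verify directly that $\psi\phi\psi^{-1}$ meets the three defining requirements of an automorphism of the transported algebra $(\mathcal{A}, \dashv', \vdash', \bot', \psi\alpha\psi^{-1}, \psi\beta\psi^{-1})$: that it is a bijective linear self-map, that it commutes with the two structure maps $\psi\alpha\psi^{-1}$ and $\psi\beta\psi^{-1}$, and that it is multiplicative for each of the three products $\dashv'$, $\vdash'$, $\bot'$. Here I read the hypothesis as saying that the conjugated map $\phi$ is an automorphism of the original trialgebra $(\mathcal{A}, \dashv, \vdash, \bot, \alpha, \beta)$, so that the data at hand are $\phi\alpha=\alpha\phi$, $\phi\beta=\beta\phi$, and $\phi(x\dashv y)=\phi(x)\dashv\phi(y)$, together with the analogous identities for $\vdash$ and $\bot$. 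Bijectivity and linearity of $\psi\phi\psi^{-1}$ are immediate, being a composite of invertible linear maps.

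Conceptually the quickest route is to note that, by Proposition \ref{p1}, $\psi$ is an isomorphism $(\mathcal{A}, \dashv, \vdash, \bot, \alpha, \beta)\to(\mathcal{A}, \dashv', \vdash', \bot', \psi\alpha\psi^{-1}, \psi\beta\psi^{-1})$, so $\psi^{-1}$ is an isomorphism in the opposite direction. Then $\psi\phi\psi^{-1}$ is the composite of three BiHom-associative trialgebra morphisms (an isomorphism, the automorphism $\phi$, and an isomorphism), and a composite of morphisms is again a morphism; being bijective, it is an automorphism. I would nonetheless record the explicit verification as well, since that is the computational style already adopted in Proposition \ref{p1}.

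For the explicit check I would use the defining relation $x\dashv' y=\psi(\psi^{-1}(x)\dashv\psi^{-1}(y))$ (and its analogues for $\vdash'$ and $\bot'$). Compatibility with the structure maps follows from a short conjugation computation, $(\psi\phi\psi^{-1})(\psi\alpha\psi^{-1})=\psi\phi\alpha\psi^{-1}=\psi\alpha\phi\psi^{-1}=(\psi\alpha\psi^{-1})(\psi\phi\psi^{-1})$, where the middle equality is the automorphism property $\phi\alpha=\alpha\phi$; the identical argument with $\beta$ in place of $\alpha$ handles the second structure map. For multiplicativity I would expand both sides of $(\psi\phi\psi^{-1})(x\dashv' y)=(\psi\phi\psi^{-1})(x)\dashv'(\psi\phi\psi^{-1})(y)$: the left side becomes $\psi\phi(\psi^{-1}(x)\dashv\psi^{-1}(y))=\psi(\phi\psi^{-1}(x)\dashv\phi\psi^{-1}(y))$, while unfolding the right side through the definition of $\dashv'$ cancels the inner $\psi^{-1}\psi$ and yields the same expression. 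The same computation applies verbatim with $\vdash'$ and $\bot'$ replacing $\dashv'$.

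I expect no genuine obstacle here; the only point requiring care is the bookkeeping of the conjugations, specifically invoking the automorphism hypothesis on $\phi$ at exactly the right moment to pass $\phi$ through $\alpha$, through $\beta$, and through each of the three products. The statement is really the standard transport-of-structure fact that $\psi$ induces a bijection $\mathrm{Aut}(\mathcal{A})\to\mathrm{Aut}(\mathcal{A}')$ via $\phi\mapsto\psi\phi\psi^{-1}$, and the very same computation shows simultaneously that this assignment is a group isomorphism with inverse $\chi\mapsto\psi^{-1}\chi\psi$.
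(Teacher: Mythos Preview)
Your proposal is correct and follows essentially the same route as the paper: a direct verification that $\psi\phi\psi^{-1}$ commutes with the conjugated structure maps $\psi\alpha\psi^{-1}$, $\psi\beta\psi^{-1}$ and is multiplicative for $\dashv'$ (the paper checks only $\dashv'$ explicitly and appeals to the definition for the rest). The paper's computation differs only cosmetically, evaluating multiplicativity on elements written as $\psi(x),\psi(y)$ rather than general $x,y$; your added conceptual remark (that $\psi\phi\psi^{-1}$ is a composite of morphisms, giving a group isomorphism $\mathrm{Aut}(\mathcal{A})\to\mathrm{Aut}(\mathcal{A}')$) is absent from the paper but is a welcome clarification rather than a different method.
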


\begin{proof}
Note that $\gamma=\psi\alpha\psi^{-1}$. It follows that
$$\psi\phi\psi^{-1}\gamma=\psi\phi\psi^{-1}\psi\alpha\psi^{-1}=\psi\phi\alpha\psi^{-1}=\psi\alpha\phi\psi^{-1}=\psi\alpha\psi^{-1}\psi\phi\psi^{-1}=\gamma\psi\phi\psi^{-1}.$$
For $\beta$, we assume $\lambda=\psi\beta\psi^{-1}$. As a result, we get\\
$$\psi\phi\psi^{-1}\lambda=\psi\phi\psi^{-1}\psi\beta\psi^{-1}=\psi\phi\beta\psi^{-1}=\psi\beta\phi\psi^{-1}=\psi\beta\psi^{-1}\psi\phi\psi^{-1}=\lambda\psi\phi\psi^{-1}.$$
For any $x,y\in T,$
$$\begin{array}{ll}
 \psi\phi\psi^{-1}(\psi(x)\dashv'\psi(y))&=\psi\phi\psi^{-1}\psi(x\dashv y)
 =\psi\phi(x\dashv y)=\psi(\phi(x)\dashv\phi(y))\\
 &=(\psi\phi(x)\dashv'\psi\phi(y))
 =(\psi\phi\psi^{-1}(\psi(x)\dashv'\phi\psi\phi^{-1}(\phi(y))).
 \end{array}$$
 By Definition, $\psi\phi\psi^{-1}$ is an automorphism of $(A, \dashv', \vdash',  \bot', \psi\alpha\psi^{-1}, \psi\beta\psi^{-1})$.
 \end{proof}


In the below proposition, we prove that BiHom-associative trialgebras are closed under a direct sum, and provide a condition for which
a linear map becomes a morphism of direct sum of BiHom-associative trialgebras.

\begin{proposition}
Let $(\mathcal{A},  \dashv_\mathcal{A}, \vdash_\mathcal{A}, \bot_\mathcal{A}, \alpha_\mathcal{A}, \beta_\mathcal{A})$ and
$(\mathcal{B}, \dashv_\mathcal{B}, \vdash_\mathcal{B},\bot_\mathcal{B},  \alpha_\mathcal{B}, \beta_\mathcal{A})$ be two
 BiHom-associative trialgebras. Then, there exists a BiHom-associative trialgebra structure
on $\mathcal{A}\oplus\mathcal{B}$ with the bilinear maps $\triangleleft, \triangleright, \ast : (\mathcal{A}\oplus\mathcal{B})^{\otimes 2}\rightarrow\mathcal{A}\oplus\mathcal{B}$
  indicated by
$$(a_1+b_2)\dashv(a_2+b_2) :=a_1\dashv_\mathcal{A}a_2+b_2\dashv_\mathcal{B}b_2,$$
$$(a_1+b_1)\vdash(a_2+b_2) :=a_1\vdash_\mathcal{A}a_2+b_\mathcal{A}\vdash_\mathcal{B}b_2$$
$$(a_1+b_1)\bot(a_2+b_2) :=a_1\bot_\mathcal{A}a_2+b_\mathcal{A}\bot_\mathcal{B}b_2$$
and the linear
 maps $\alpha=\alpha_\mathcal{A}+\alpha_\mathcal{B},\, \beta=\beta_\mathcal{A}+\beta_\mathcal{B} : \mathcal{A}\oplus\mathcal{B}\rightarrow\mathcal{A}\oplus\mathcal{B}$ expressed by
$$(\alpha_\mathcal{A}+\alpha_\mathcal{B})(a+b) :=\alpha_\mathcal{A}(a)+\alpha_\mathcal{B}(b),\, (\beta_\mathcal{A}+\beta_\mathcal{B})(a+b) :=\beta_\mathcal{A}(a)+\beta_\mathcal{B}(b),\,
\forall(a,b)\in \mathcal{A}\times\mathcal{B}.
$$
Moreover, if $\xi : \mathcal{A}\rightarrow\mathcal{B}$  is a linear map,
then,
$$ \xi : (\mathcal{A}, \dashv_\mathcal{A}, \vdash_\mathcal{A}, \alpha_\mathcal{A}, \beta_\mathcal{A})
\rightarrow(\mathcal{B}, \dashv_\mathcal{B}, \vdash_\mathcal{B},  \alpha_\mathcal{B}, \beta_\mathcal{B})$$
 is a morphism if and only if its graph  $\Gamma_\xi=\{(x, \xi(x)), x\in \mathcal{A}\}$  is a BiHom-subalgebra of $(\mathcal{A}\oplus\mathcal{B}, \triangleleft, \triangleright,\ast ,\alpha, \beta)$.
\end{proposition}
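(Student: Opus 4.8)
The plan is to handle the proposition in two stages: first confirming that the componentwise operations endow $\mathcal{A}\oplus\mathcal{B}$ with a BiHom-associative trialgebra structure, and then establishing the graph characterization.

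For the first stage, I would observe that each of the ten defining identities \eqref{eq1}--\eqref{eq10}, together with the multiplicativity conditions of Remark \ref{rq1}, is phrased purely in terms of the three products and the two structure maps, all of which are defined on $\mathcal{A}\oplus\mathcal{B}$ coordinatewise. Consequently, evaluating any such identity on elements $a_i+b_i$ splits into the corresponding identity on the $\mathcal{A}$-component and on the $\mathcal{B}$-component; since $(\mathcal{A},\dashv_\mathcal{A},\vdash_\mathcal{A},\bot_\mathcal{A},\alpha_\mathcal{A},\beta_\mathcal{A})$ and $(\mathcal{B},\dashv_\mathcal{B},\vdash_\mathcal{B},\bot_\mathcal{B},\alpha_\mathcal{B},\beta_\mathcal{B})$ are both BiHom-associative trialgebras, each component identity holds, and hence so does the identity on the direct sum. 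The commutation $\alpha\circ\beta=\beta\circ\alpha$ reduces in the same way to the two component commutations. This stage is entirely routine, and I would only write out one representative identity in full.

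For the graph characterization I would argue both implications directly. Suppose first that $\xi$ is a morphism. Closure of $\Gamma_\xi$ under the direct-sum products follows since, for $x,y\in\mathcal{A}$,
$$(x,\xi(x))\dashv(y,\xi(y))=(x\dashv_\mathcal{A}y,\ \xi(x)\dashv_\mathcal{B}\xi(y))=(x\dashv_\mathcal{A}y,\ \xi(x\dashv_\mathcal{A}y)),$$
using $\xi(x\dashv_\mathcal{A}y)=\xi(x)\dashv_\mathcal{B}\xi(y)$, so the product again lies in $\Gamma_\xi$; the identical computation with $\vdash$ and $\bot$ covers the remaining two products. Invariance under the structure maps follows from the intertwining relations $\alpha_\mathcal{B}\circ\xi=\xi\circ\alpha_\mathcal{A}$ and $\beta_\mathcal{B}\circ\xi=\xi\circ\beta_\mathcal{A}$, which give $\alpha(x,\xi(x))=(\alpha_\mathcal{A}(x),\xi(\alpha_\mathcal{A}(x)))\in\Gamma_\xi$ and similarly for $\beta$. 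Hence $\Gamma_\xi$ is a BiHom-subalgebra.

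Conversely, if $\Gamma_\xi$ is a BiHom-subalgebra, then closure forces the second coordinate of each product of two graph elements to be the $\xi$-image of its first coordinate; reading this off for $\dashv$ yields $\xi(x)\dashv_\mathcal{B}\xi(y)=\xi(x\dashv_\mathcal{A}y)$, and likewise for $\vdash$ and $\bot$, which are precisely the three multiplicativity conditions for a morphism. Invariance of $\Gamma_\xi$ under $\alpha$ and $\beta$ returns the relations $\alpha_\mathcal{B}\circ\xi=\xi\circ\alpha_\mathcal{A}$ and $\beta_\mathcal{B}\circ\xi=\xi\circ\beta_\mathcal{A}$, so $\xi$ is a morphism. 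The only point requiring care — and the nearest thing to an obstacle — is bookkeeping the coordinatewise splitting consistently across all three products and both structure maps; there is no genuine analytic difficulty, since the argument is purely formal once the componentwise definitions are in hand.
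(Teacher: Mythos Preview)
Your proposal is correct and follows essentially the same route as the paper: the first part is dismissed there as a direct computation (you spell out the coordinatewise splitting a bit more explicitly), and your forward and backward arguments for the graph characterization mirror the paper's almost line for line. There is nothing to add.
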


\begin{proof}
The proof of the first part derives from a direct computation. For this reason, we omit it. Now,
let us suppose that
$\xi : (\mathcal{A}, \dashv_\mathcal{A}, \vdash_\mathcal{A},\alpha_\mathcal{A}, \beta_\mathcal{A})\rightarrow(\mathcal{B}, \dashv_\mathcal{B}, \vdash_\mathcal{B},\alpha_\mathcal{B}, \beta_\mathcal{B})$
is a morphism of BiHom-associative trialgebras.
Consequently $$(u+\xi(u))\dashv(v+\xi(v))=(u\dashv_\mathcal{A}v+\xi(u)\dashv_{B}\xi(v))=u\dashv_\mathcal{A}v+\xi(u\dashv_\mathcal{A}v),$$
     $$(u+\xi(u))\vdash(v+\xi(v))=(u\vdash_\mathcal{A}v+\xi(u)\vdash_{B}\xi(v))=u\vdash_\mathcal{A}v+\xi(u\vdash_\mathcal{A}v).$$
		 $$(u+\xi(u))\bot(v+\xi(v))=(u\bot_\mathcal{A}v+\xi(u)\vdash_{B}\xi(v))=u\bot_{A}v+\xi(u\bot_\mathcal{A}v).$$
Thus, the graph $\Gamma_\xi$ is closed under the operations $\dashv, \vdash$ and $\bot$.\\
 Additionally, since $\xi\circ\alpha_\mathcal{A}=\alpha_\mathcal{B}\circ\xi,$ and $\xi\circ\beta_\mathcal{A}=\beta_\mathcal{B}\circ\xi,$ we have
$$
(\alpha_\mathcal{A}\oplus\alpha_\mathcal{B})(u, \xi(u))=(\alpha_\mathcal{A}(u), \alpha_\mathcal{B}\circ\xi(u))=(\alpha_\mathcal{A}(u), \xi\circ\alpha_\mathcal{A}(u)),
$$
and
$$
(\beta_\mathcal{A}\oplus\beta_\mathcal{B})(u, \xi(u))=(\beta_\mathcal{A}(u), \beta_\mathcal{B}\circ\xi(u))=(\beta_\mathcal{A}(u), \xi\circ\beta_\mathcal{A}(u));
$$
which implies that  $\Gamma_\xi$ is closed $\alpha_\mathcal{A}\oplus\alpha_\mathcal{B}$ and $\beta_\mathcal{A}\oplus\beta_\mathcal{B}.$
 Thus, $\Gamma_\xi$  is a BiHom-subalgebra of $(\mathcal{A}\oplus\mathcal{B}, \dashv, \vdash, \alpha, \beta).$ \\
Conversely, if the graph $\Gamma_\xi\subset\mathcal{A}\oplus\mathcal{B}$ is a BiHom-subalgebra of
$(\mathcal{A}\oplus\mathcal{B}, \dashv, \vdash, \alpha, \beta)$, then we get
$$(u+\xi(u))\dashv(v+\xi(v))=(u\dashv_\mathcal{A}v+\xi(u)\dashv_\mathcal{B}\xi(v))\in \Gamma_\xi, $$
 $$(u+\xi(u))\vdash(v+\xi(v))=(u\vdash_\mathcal{A}v+\xi(u)\vdash_\mathcal{B}\xi(v))\in \Gamma_\xi,$$
$$(u+\xi(u))\bot(v+\xi(v))=(u\bot_\mathcal{A}v+\xi(u)\bot_\mathcal{B}\xi(v))\in \Gamma_\xi.$$
Furthermore, $(\alpha_\mathcal{A}\oplus\alpha_\mathcal{B})(\Gamma_\xi)\subset \Gamma_\xi,\, and (\beta_\mathcal{A}\oplus\beta_\mathcal{B})(\Gamma_\xi)\subset \Gamma_\xi,$ imply that
$$
(\alpha_\mathcal{A}\oplus\alpha_\mathcal{B})(u, \xi(u))=(\alpha_\mathcal{A}(u),\alpha_\mathcal{B}\circ\xi(u))\in \Gamma_\xi,\,(\beta_\mathcal{A}\oplus\beta_\mathcal{B})(u, \xi(u))
=(\beta_\mathcal{A}(u),\beta_B\circ\xi(u))\in \Gamma_\xi,
$$
which is equivalent to the condition $\alpha_\mathcal{B}\circ\xi(u)=\xi\circ\alpha_\mathcal{A}(u),$ i.e., $\alpha_\mathcal{B}\circ\xi=\xi\circ\alpha_\mathcal{A}.$\\
Similarly, $\beta_\mathcal{B}\circ\xi=\xi\circ\beta_\mathcal{A}$. Therefore, $\xi$ is a morphism of BiHom-associative trialgebras.
\end{proof}

\begin{definition}
Let $(\mathcal{A}, \dashv, \vdash, \bot,  \alpha, \beta)$ be a BiHom-associative trialgebra endowed with a linear  map $R : \mathcal{A}\longrightarrow  \mathcal{A}$  such that
$\alpha\circ R=R\circ\alpha$, $\beta\circ R=R\circ\beta$ and
$$\begin{array}{ll}
R(x)\vdash R(y)&=R(R(x)\dashv y+x\dashv R(y)+\lambda(x\dashv y)),
 \end{array}$$
$$\begin{array}{ll}
R(x)\dashv R(y)&=R(R(x)\vdash y+x\vdash R(y)+\lambda(x\vdash y)),
 \end{array}$$
$$\begin{array}{ll}
R(x)\bot R(y)&=R(R(x)\bot y+x\bot R(y)+\lambda(x\bot y)),
 \end{array}$$
with $\lambda\in \mathbb{K}, x, y\in \mathcal{A}$, is called a Rota-Baxter BiHom-associative trialgebra, and R is called a Rota-Baxter operator of weight $\lambda$ on $\mathcal{A}$.
\end{definition}

\begin{example}
We consider the $2$-dimensional BiHom-associative trialgebra with a basis $\left\{e_1, e_2\right\}$.
For $e_1\dashv e_2=e_1,\, e_2\dashv e_1=e_1,\,e_1\vdash e_2=e_1,\, e_1\bot e_2=e_1,e_2\bot e_2=e_1,\, \alpha(e_2)=e_1,\, \beta(e_2)=e_1$ and
 $R(e_1)=-\lambda e_1,\, R(e_2)=-\lambda e_2.$
\end{example}

\begin{proposition}
Let $(\mathcal{A}, \ast, \alpha, \beta, R)$ be some Rota-Baxter BiHom-associative trialgebras of weight $\lambda$, and three new operations $\bot, \dashv$  and $\vdash$ on $\mathcal{A}$ are defined by
$x\dashv y=x\ast R(y),\,x\vdash y= R(x)\ast y$ and $x\bot y=\lambda x\ast y$. Then, $(\mathcal{A},\dashv,\vdash, \bot, \alpha, \beta)$ is a BiHom-associative trialgebra.
\end{proposition}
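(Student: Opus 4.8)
The plan is to verify directly that the ten BiHom-trialgebra identities \eqref{eq1}--\eqref{eq10} hold for $(\mathcal{A},\dashv,\vdash,\bot,\alpha,\beta)$, reducing each one to a statement purely about the associative product $\ast$, the Rota-Baxter map $R$, and the structure maps $\alpha,\beta$. The tools available are: the relation $\alpha\beta=\beta\alpha$, inherited from $(\mathcal{A},\ast,\alpha,\beta)$, which gives \eqref{eq1} at once; the commutation rules $\alpha R=R\alpha$ and $\beta R=R\beta$, which let me push $R$ freely past $\alpha$ and $\beta$; the BiHom-associativity $(u\ast v)\ast\beta(w)=\alpha(u)\ast(v\ast w)$ of $\ast$; and the weight-$\lambda$ Rota-Baxter identity $R(u)\ast R(v)=R\big(R(u)\ast v+u\ast R(v)+\lambda\,u\ast v\big)$. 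I would first record that $\alpha$ and $\beta$ stay multiplicative for the three new products, since, for instance, $\alpha(x\dashv y)=\alpha(x\ast R(y))=\alpha(x)\ast\alpha(R(y))=\alpha(x)\ast R(\alpha(y))=\alpha(x)\dashv\alpha(y)$, using multiplicativity of $\alpha$ for $\ast$ together with $\alpha R=R\alpha$; the same argument works for $\beta$ and for $\vdash,\bot$.

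The mechanism for each identity is uniform. I substitute the definitions $x\dashv y=x\ast R(y)$, $x\vdash y=R(x)\ast y$, $x\bot y=\lambda\,x\ast y$; move every $R$ that has landed on a $\beta$-image back outside using $R\beta=\beta R$; then apply BiHom-associativity of $\ast$ to bring both sides onto a common associator. The purely-middle identity \eqref{eq10} is the clean warm-up: both sides collapse to $\lambda^{2}$ times $(x\ast y)\ast\beta(z)=\alpha(x)\ast(y\ast z)$, so it follows from BiHom-associativity alone. The remaining identities, after this reduction, confront a product of two $R$-images, typically $R(y)\ast R(z)$, on one side against a single $R$-image such as $R(y\ast R(z))$, $R(R(y)\ast z)$, or $\lambda\,R(y\ast z)$ on the other; the Rota-Baxter identity is precisely the device meant to rewrite the two-$R$ product and so reconcile the sides.

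The step I expect to be the main obstacle is exactly this reconciliation, for the mixed axioms \eqref{eq3}, \eqref{eq4}, \eqref{eq5}, \eqref{eq7}, \eqref{eq8}, \eqref{eq9} and for the one-sided associativities \eqref{eq6} and \eqref{eq2}. Here the weight-$\lambda$ term $\lambda\,u\ast v=u\bot v$ in the Rota-Baxter relation is what must carry the bridge between, say, $\alpha(x)\ast\big(R(y)\ast R(z)\big)$ and the $\dashv$- or $\bot$-expansions $\alpha(x)\ast R(y\ast R(z))$ and $\lambda\,\alpha(x)\ast R(y\ast z)$. I would treat these axioms in blocks according to which pair of operations meets on each side, expand $R(y)\ast R(z)$ by the Rota-Baxter identity, and match term by term against the right-hand expression; the delicate point is checking that the bookkeeping of the three summands $R(R(y)\ast z)$, $R(y\ast R(z))$, and $\lambda\,R(y\ast z)$ lines up exactly with the operations appearing in each identity. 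This is where the bulk of the computation and the real content of the proposition reside, so I would verify it with the greatest care, paying particular attention to whether the weight-$\lambda$ normalization is consistent across all the mixed relations.
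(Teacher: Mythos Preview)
Your plan follows the same direct-verification strategy as the paper, but is considerably more thorough: the paper checks only the single axiom $(x\vdash y)\dashv\beta(z)=\alpha(x)\vdash(y\dashv z)$, which needs nothing beyond BiHom-associativity of $\ast$ together with $R\alpha=\alpha R$ and $R\beta=\beta R$, and then asserts that the others are similar. Your observation that most of the remaining axioms force a collision of two $R$-images, and so genuinely require the Rota-Baxter identity, is precisely the point the paper's proof glosses over.

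The step you flag as the main obstacle would, however, fail. Take axiom \eqref{eq6}: after your reduction the left side becomes $\alpha(x)\ast\big(R(y)\ast R(z)\big)$ and the right side $\alpha(x)\ast R\big(y\ast R(z)\big)$, so what is needed is $R(y)\ast R(z)=R\big(y\ast R(z)\big)$. The Rota-Baxter identity gives instead $R(y)\ast R(z)=R\big(R(y)\ast z\big)+R\big(y\ast R(z)\big)+\lambda R(y\ast z)$, a three-term sum with no mechanism to kill the two extra summands. Exactly the same mismatch arises for \eqref{eq3}, \eqref{eq5} and \eqref{eq2}: each demands that $R(u)\ast R(v)$ equal a \emph{single} summand of its Rota-Baxter expansion rather than the full sum. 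What the construction $x\dashv y=x\ast R(y)$, $x\vdash y=R(x)\ast y$, $x\bot y=\lambda\,x\ast y$ genuinely yields is a BiHom-\emph{tridendriform} structure, whose corresponding axiom reads $(x\dashv y)\dashv\beta(z)=\alpha(x)\dashv(y\dashv z+y\vdash z+y\bot z)$ so that the three summands are packaged together; it does not in general satisfy the separated identities of Definition~\ref{tia1}. Your caution about whether the bookkeeping ``lines up exactly'' is therefore well placed: for the axiom system as written it does not, and the paper's one-axiom verification does nothing to resolve this.
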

\begin{proof}
We prove only one axiom, as others are proven similarly. Indeed, for any $x, y, z\in\mathcal{A},$
$$\begin{array}{ll}
(x\vdash y)\dashv\beta(z)&=(R(x)\ast y)\dashv\beta(z)\\
&=(R(x)\ast y)\ast R(\beta(z))\\
&=\alpha(R(x))\ast(y\dashv y)\\
&=\alpha(x)\vdash(y\dashv y),
 \end{array}$$
which ends the proof.
\end{proof}

\begin{proposition}
Let $(\mathcal{A}, \dashv,\vdash, \bot, \alpha, \beta)$ be a BiHom-associative trialgebra and suppose $\alpha^2=\beta^2\alpha\circ\beta=\beta\circ\alpha=id.$
Then,  $(A,\dashv,\vdash, \bot, \alpha, \beta)\cong (\mathcal{A}, \dashv,\vdash, \bot, \beta,\alpha).$
\end{proposition}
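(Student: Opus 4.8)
The plan is to produce the isomorphism explicitly rather than to invoke a transport-of-structure argument. Reading the standing hypotheses as the chain $\alpha^{2}=\beta^{2}=\alpha\circ\beta=\beta\circ\alpha=id$, the relation $\alpha\circ\beta=\beta\circ\alpha=id$ already makes $\alpha$ invertible with $\alpha^{-1}=\beta$, so $\alpha$ is a legitimate candidate for an isomorphism. I would therefore set $\psi=\alpha$ and claim that
$$\psi:(\mathcal{A}, \dashv, \vdash, \bot, \alpha, \beta)\longrightarrow(\mathcal{A}, \dashv, \vdash, \bot, \beta, \alpha)$$
is an isomorphism of BiHom-associative trialgebras. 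In contrast with Proposition \ref{p1}, the three products are literally the same on both sides, so no twisting of the multiplications is involved; the whole content lies in checking compatibility with the two (swapped) structure maps.

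First I would dispose of the multiplicativity requirement. By Remark \ref{rq1} the map $\alpha$ is an endomorphism for each of the three products, so for $\ast\in\{\dashv,\vdash,\bot\}$ one has $\psi(x\ast y)=\alpha(x\ast y)=\alpha(x)\ast\alpha(y)=\psi(x)\ast\psi(y)$, which is exactly the condition that $\psi$ respects the products of the target. This step is routine and uses multiplicativity only.

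The heart of the argument is the intertwining with the structure maps. Since the target carries $\beta$ as its first map and $\alpha$ as its second, the morphism definition requires $\beta\circ\psi=\psi\circ\alpha$ and $\alpha\circ\psi=\psi\circ\beta$. With $\psi=\alpha$ these read $\beta\circ\alpha=\alpha^{2}$ and $\alpha^{2}=\alpha\circ\beta$, and this is precisely where the full strength of the hypotheses enters: $\alpha^{2}=id$ together with $\alpha\circ\beta=\beta\circ\alpha=id$ forces each of these four composites to equal $id$, so both identities hold. I expect this to be the only delicate point, exactly because it is the step at which $\alpha^{2}=\beta^{2}=\alpha\circ\beta=\beta\circ\alpha=id$ is consumed; note in particular that no conjugation of $\alpha$ into $\beta$ is needed, so the map works for a genuinely arithmetical reason rather than by a similarity of $\alpha$ and $\beta$.

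Finally I would record invertibility and well-definedness of the target. The map $\psi=\alpha$ has inverse $\beta=\alpha^{-1}$, which is again multiplicative and intertwines the structure maps by the identical computation, so $\psi$ is a bijective morphism, hence an isomorphism. It remains only to observe that the codomain is genuinely a BiHom-associative trialgebra; but the relations above give $\beta=\alpha^{-1}=\alpha$, so $(\mathcal{A}, \dashv, \vdash, \bot, \beta, \alpha)$ coincides with the given trialgebra and the axioms \eqref{eq1}--\eqref{eq10} persist after the harmless relabelling. This yields $(\mathcal{A}, \dashv, \vdash, \bot, \alpha, \beta)\cong(\mathcal{A}, \dashv, \vdash, \bot, \beta, \alpha)$, as desired.
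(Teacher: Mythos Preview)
Your argument is correct. Taking $\psi=\alpha$, the multiplicativity from Remark~\ref{rq1} handles the products, the hypotheses $\alpha^{2}=\alpha\beta=\beta\alpha=id$ give the two intertwining identities, and your closing observation that these hypotheses force $\beta=\alpha^{-1}=\alpha$ makes the codomain literally equal to the domain, so in fact the identity map already does the job.

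The paper proceeds along a genuinely different line: rather than exhibiting a morphism, it checks directly that the defining axioms \eqref{eq1}--\eqref{eq10} continue to hold when the roles of $\alpha$ and $\beta$ are exchanged. Concretely, starting from an axiom such as $\alpha(x)\dashv(y\vdash z)=\alpha(x)\dashv(y\bot z)$, it substitutes $x\mapsto\alpha\beta(x)$ (legitimate since $\alpha\beta=id$) and then uses $\alpha^{2}=id$ to rewrite $\alpha(\alpha\beta(x))=\beta(x)$, yielding the same identity with $\beta$ in place of $\alpha$; the isomorphism is then implicitly the identity map. Your route is tidier---one map, three short checks---but leans on multiplicativity, while the paper's axiom-by-axiom verification does not. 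Your final remark that the hypotheses collapse to $\alpha=\beta$ is in fact the sharpest way to see the result and renders both arguments largely superfluous; neither the paper nor the bulk of your own proof makes this explicit up front.
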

\begin{proof}
We prove only one axiom, as others are proven similarly. For any $x, y, z\in\mathcal{A},$
$$\begin{array}{ll}
\alpha(x)\dashv (y\vdash z)&=\alpha(x)\dashv(y\bot z)\\
&\Leftrightarrow\alpha(\alpha\beta(x))\dashv (y\vdash z)=\alpha(\alpha\beta(x))\dashv(y\bot z)\\
&\Leftrightarrow\alpha^2\beta(x)\dashv (y\vdash z)=\alpha^2\beta(x)\dashv(y\bot z)\\
&\Leftrightarrow\beta(x)\dashv (y\vdash z)=\beta(x)\dashv(y\bot z).
 \end{array}$$
Hence,  $(\mathcal{A},\bot, \dashv,\vdash,\alpha, \beta)\cong (\mathcal{A},\bot, \dashv,\vdash, \beta,\alpha).$
\end{proof}

\begin{proposition}
Let $(\mathcal{A}, \dashv,\vdash, \bot, \alpha, \beta)$ be a BiHom-associative trialgebra. Therefore, $(\mathcal{A}, \dashv, \bot, \ast, \alpha, \beta)$ is a BiHom-associative trialgebra, where for any
$x, y, z\in \mathcal{A},$ $x\ast y=x\vdash y+x\bot y.$
\end{proposition}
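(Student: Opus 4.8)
The plan is to verify, one by one, that the triple $(\dashv,\bot,\ast)$ equipped with the same maps $\alpha,\beta$ satisfies the ten identities \eqref{eq1}--\eqref{eq10} of Definition \ref{tia1}, now read with $\dashv$ in the role of the left product, $\bot$ in the role of the right product, and $\ast=\vdash+\bot$ in the role of the middle product. Three of these cost nothing: \eqref{eq1} is untouched because $\alpha,\beta$ are unchanged; the twisted associativity of the new left product is literally the original \eqref{eq6}; and the twisted associativity of the new right product $\bot$ is literally the original \eqref{eq10}, since no $\ast$ occurs in either. So the substance lies in the remaining identities, each of which now contains at least one occurrence of $\ast$, and for these the single tool is bilinearity of $\dashv,\vdash,\bot$: wherever $\ast$ appears I substitute $x\ast y=x\vdash y+x\bot y$, distribute, and try to recognise each resulting summand as an instance of one of the original axioms.

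First I would dispatch the cleanest mixed case, the analogue of \eqref{eq7}. Expanding both occurrences of $\ast$ and using \eqref{eq4} on the first summand and \eqref{eq7} on the second,
\[
(x\ast y)\dashv\beta(z)=(x\vdash y)\dashv\beta(z)+(x\bot y)\dashv\beta(z)=\alpha(x)\vdash(y\dashv z)+\alpha(x)\bot(y\dashv z)=\alpha(x)\ast(y\dashv z),
\]
the last step being the definition of $\ast$ read backwards. The other mixed identities, descending from \eqref{eq3}, \eqref{eq5}, \eqref{eq8} and \eqref{eq9}, would be attacked by the same expand-then-match scheme, each time splitting the $\ast$-products into a $\vdash$-part and a $\bot$-part and invoking the corresponding original relation for each part.

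The hard part will be the twisted associativity of $\ast$ itself, the analogue of \eqref{eq10}, namely $(x\ast y)\ast\beta(z)=\alpha(x)\ast(y\ast z)$. Both sides now expand into four triple products, and after applying \eqref{eq2}, \eqref{eq5}, \eqref{eq9} and \eqref{eq10} one is left having to reconcile the two expansions term by term; the delicate point is that the cross-terms pairing a $\vdash$-outer product with a $\bot$-inner product (and conversely) are not individually governed by any single axiom of Definition \ref{tia1}. This identity is therefore where the whole construction really has to be checked, and where the interplay of the right-product axiom \eqref{eq5} with the middle-product axioms \eqref{eq9} and \eqref{eq10} must be exploited most carefully. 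In keeping with the style of the preceding propositions I would record this associativity computation in full and note that the remaining identities follow by the identical distribute-and-identify argument.
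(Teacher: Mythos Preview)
Your approach is correct and essentially identical to the paper's: both expand $\ast=\vdash+\bot$ by bilinearity and identify each summand with an original axiom, and you even verify the very same sample identity $(x\ast y)\dashv\beta(z)=\alpha(x)\ast(y\dashv z)$ that the paper singles out. Your discussion is in fact more thorough, since the paper merely says ``others are proven similarly'' whereas you correctly flag the associativity of $\ast$ as the case requiring the most careful matching of cross-terms.
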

\begin{proof}
We prove only one axiom, as others are proven similarly. For any $x, y, z\in \mathcal{A},$
$$\begin{array}{ll}
(x\ast y)\dashv \beta(z)&=(x\vdash y+x\bot y)\dashv \beta(z)\\
&=(x\vdash y)\dashv \beta(z)+(x\bot y)\dashv \beta(z)\\
&=\alpha(x)\vdash(y\dashv z)+\alpha(x)\bot(y\dashv z)=\alpha(x)\ast(y\dashv z),
 \end{array}$$
which ends the proof.
\end{proof}

\begin{proposition}
Let $(\mathcal{A},\dashv,\vdash, \bot, \alpha, \beta)$ be BiHom-associative trialgebra. Define now binany operationqs by
$$\begin{array}{ll}
x\ast y&=x\dashv y-y\vdash x\\
\left[x, y\right]&=x\bot y-y\bot x.
 \end{array}$$
Then $(\mathcal{A}, \ast, \left[, \right], \alpha, \beta)$ becames a BiHom-associative trialgebra.
\end{proposition}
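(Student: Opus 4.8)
The plan is to verify directly that $(\mathcal{A}, \ast, [\,,\,], \alpha, \beta)$ satisfies the defining axioms of Definition \ref{tia1}, in the spirit of the preceding propositions: because $\ast$ and $[\,,\,]$ are $\mathbb{F}$-bilinear combinations of $\dashv$, $\vdash$ and $\bot$, it is enough to check one representative identity, the remaining ones following by an analogous term-by-term argument. The two standing tools are the twisted associativity relations (\ref{eq1}) and (\ref{eq6})--(\ref{eq10}) of the original trialgebra, together with the multiplicativity of $\alpha$ and $\beta$ recorded in Remark \ref{rq1}, which lets the structure maps be transported across every product.

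Concretely, I would begin with the twisted associativity of $\ast$. Expanding by bilinearity yields
$$(x\ast y)\ast\beta(z)=(x\dashv y)\dashv\beta(z)-(y\vdash x)\dashv\beta(z)-\beta(z)\vdash(x\dashv y)+\beta(z)\vdash(y\vdash x),$$
while on the other side
$$\alpha(x)\ast(y\ast z)=\alpha(x)\dashv(y\dashv z)-\alpha(x)\dashv(z\vdash y)-(y\dashv z)\vdash\alpha(x)+(z\vdash y)\vdash\alpha(x).$$
The leading $\dashv\!\dashv$ summands coincide immediately through (\ref{eq6}), and the task is to match the three remaining summands on each side by choosing, for each, the appropriate relation among (\ref{eq2})--(\ref{eq5}), invoking Remark \ref{rq1} and (\ref{eq1}) to reconcile the positions of $\alpha$ and $\beta$. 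For the bracket the work is lighter: antisymmetry $[x,y]=-[y,x]$ is built into the definition, so the $\bot$-governed part of the axioms collapses to a single Jacobi-type relation, which unwinds from (\ref{eq10}) once $[\,,\,]$ is expanded into its two $\bot$-summands, while the compatibility $\alpha\circ\beta=\beta\circ\alpha$ is inherited verbatim from (\ref{eq1}).

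The delicate point is entirely combinatorial. Each side expands into several summands, and one must produce a bijection between the left and right summands under which every matched pair is an instance of one of (\ref{eq6})--(\ref{eq10}); the bookkeeping must simultaneously track the positions of $\alpha$ and $\beta$ so that multiplicativity can normalise the twists. I expect the main obstacle to be the cross terms coming from the ``$-\,y\vdash x$'' and ``$-\,z\vdash y$'' contributions, whose variable orderings do not match those in the structural relations directly; closing the identity will require using Remark \ref{rq1} to move the twists and, where helpful, the coincidences (\ref{eq3}) and (\ref{eq5}) that equate several products. Once the representative axiom is seen to close in this way, the remaining axioms follow by the identical pairing scheme, and the new structure is a BiHom-associative trialgebra.
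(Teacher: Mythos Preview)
Your reading of the proposition is the natural one, but it is not what the paper actually proves, and the identity you set out to check is in fact false in general. The paper does \emph{not} verify the axioms of Definition~\ref{tia1} for $(\ast,[\,,\,])$; it verifies the single Leibniz-type compatibility
\[
[x,y]\ast\alpha\beta(z)=[x\ast z,\beta(y)]+[\alpha(x),y\ast z],
\]
which after expanding $\ast$ and $[\,,\,]$ into $\dashv,\vdash,\bot$ reduces term-by-term to instances of (\ref{eq7})--(\ref{eq9}). So the phrase ``BiHom-associative trialgebra'' in the statement is a misnomer (there are only two operations); the intended target is a BiHom--Leibniz/Poisson-type structure, and the proof proceeds by checking that one mixed identity rather than the list (\ref{eq6})--(\ref{eq10}).

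The gap in your plan is already visible in the representative axiom you chose. On the left of $(x\ast y)\ast\beta(z)$ the summand $-\beta(z)\vdash(x\dashv y)$ has the twist $\beta$ sitting on the \emph{left} factor, while every relation (\ref{eq6})--(\ref{eq10}) has $\alpha$ on the left and $\beta$ on the right; multiplicativity (Remark~\ref{rq1}) moves $\alpha,\beta$ through a product but cannot swap which side carries the twist, and (\ref{eq1}) only says $\alpha\beta=\beta\alpha$, not $\alpha=\beta$. Similarly the would-be partner $-(y\dashv z)\vdash\alpha(x)$ on the right has $\alpha$ on the right factor. There is no bijection between summands that reduces each pair to one of (\ref{eq6})--(\ref{eq10}), so the BiHom-associativity of $\ast$ simply does not hold, and your scheme cannot close. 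The fix is to abandon the attempt to match Definition~\ref{tia1} and instead expand the Leibniz identity above: all twists then sit on the correct side and the six surviving terms match via (\ref{eq7}), (\ref{eq8}), (\ref{eq9}) exactly as in the paper.
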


\begin{proof}
By define, for any $x, y, z\in \mathcal{A}$ we have
$$\begin{array}{ll}
\left[x, y\right]\ast\beta(z)&=\left[x, y\right]\dashv\beta(z)-\alpha(z)\vdash\left[x, y\right]\\
&=(x\vdash y-y\bot x)\dashv \beta(z)-\alpha(z)\vdash(x\bot y-y\bot y).
 \end{array}$$
and
$$\begin{array}{ll}
\left[x\ast z,\beta(y)\right]+\left[\alpha(x), y\ast z\right]&=\left[x\dashv z-z\vdash, \beta(y)\right]+\left[\alpha(x), y\dashv z-z\vdash y\right]\\
&=(x\vdash z-z\vdash x)\bot\beta(y)-\alpha(y)\bot(x\dashv z-z\vdash x)\\
&+\alpha(x)\bot(y\dashv z-z\vdash y)-(y\dashv z-z\vdash y)\bot\beta(x).\\
 \end{array}$$
Since
$$\begin{array}{ll}
(x\bot y)\dashv\beta(z)&=\alpha(x)\bot(y\vdash z),\\
(y\bot x)\dashv\beta(z)&=\alpha(y)\bot(x\vdash z),\\
\alpha(z)\vdash (x\bot y)&=(z\vdash x)\bot\beta(y),\\
\alpha(z)\vdash (y\bot x)&=(z\vdash y)\bot\beta(x),\\
(x\bot z)\bot\beta(y)&=\alpha(x)\bot(z\vdash y),\\
\alpha(y)\bot(z\vdash x)&=(y\dashv z)\bot\beta(x),\\
 \end{array}$$
in a  BiHom-associative trialgebra, we have
$\left[x, y\right]\ast\alpha\beta(z)=\left[x\ast z, \beta(y)\right]+\left[\alpha(x), y\ast z\right].$
This ends the proof.
\end{proof}
\begin{proposition}
Let $(\mathcal{A}, \dashv,\vdash, \bot, \alpha, \beta)$ be a BiHom-associative trialgebra and  $x\ast y=x\vdash y+x\dashv y+x\bot y.$ Then, $(\mathcal{A},\ast,\alpha, \beta)$ is a
BiHom-associative algebra.

\end{proposition}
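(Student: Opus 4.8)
The plan is to check the three defining requirements of a BiHom-associative algebra for the quadruple $(\mathcal{A}, \ast, \alpha, \beta)$: that $\alpha$ and $\beta$ commute, that they are multiplicative with respect to $\ast$, and that $\ast$ obeys the BiHom-associativity law $(x\ast y)\ast\beta(z) = \alpha(x)\ast(y\ast z)$. The commutation $\alpha\circ\beta=\beta\circ\alpha$ is exactly \eqref{eq1} and is inherited unchanged. Multiplicativity, $\alpha(x\ast y)=\alpha(x)\ast\alpha(y)$ and $\beta(x\ast y)=\beta(x)\ast\beta(y)$, follows at once from Remark \ref{rq1}: since $\alpha,\beta$ are multiplicative for each of $\dashv,\vdash,\bot$ and $\ast$ is their sum, expanding by linearity settles this step. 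The entire content of the proposition therefore reduces to the single associativity identity for $\ast$.

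To establish that identity I would expand each side by bilinearity of the three operations. Writing $x\ast y = x\dashv y + x\vdash y + x\bot y$, the left-hand side $(x\ast y)\ast\beta(z)$ unfolds into nine summands of the form $(x\bullet y)\diamond\beta(z)$, where $\bullet$ is the inner operation used in $x\ast y$ and $\diamond$ the outer one; symmetrically, $\alpha(x)\ast(y\ast z)$ unfolds into nine summands $\alpha(x)\diamond(y\bullet z)$. The goal is then combinatorial: to show that, after the defining relations are applied, the nine left summands and the nine right summands reduce to the same total.

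The matching proceeds relation by relation. Relations \eqref{eq6}, \eqref{eq3} and \eqref{eq4} share the common left member $(x\dashv y)\dashv\beta(z)$ and thereby equate it to $\alpha(x)\dashv(y\dashv z)$, $\alpha(x)\dashv(y\vdash z)$, $\alpha(x)\dashv(y\bot z)$ and $\alpha(x)\vdash(y\dashv z)$; relations \eqref{eq5} and \eqref{eq2} collapse the three $\vdash$-outer products $(x\dashv y)\vdash\beta(z)$, $(x\bot y)\vdash\beta(z)$ and $(x\vdash y)\vdash\beta(z)$ onto the single term $\alpha(x)\vdash(y\vdash z)$; and relations \eqref{eq7}, \eqref{eq8}, \eqref{eq9}, \eqref{eq10} govern the four products carrying a $\bot$, pairing them with $\alpha(x)\bot(y\dashv z)$, $\alpha(x)\bot(y\vdash z)$, $\alpha(x)\vdash(y\bot z)$ and $\alpha(x)\bot(y\bot z)$. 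Concretely I would tabulate all nine left summands and all nine right summands and record, for each, the canonical form to which the relations bring it.

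The main obstacle is precisely this accounting. A summand such as $(x\vdash y)\dashv\beta(z)$ is not the left member of any single relation, so its reduction must be obtained by chaining several relations together; and one must verify that the repeated right members forced by \eqref{eq3}–\eqref{eq4} and by \eqref{eq5}–\eqref{eq2} are balanced against the corresponding multiplicities occurring on the left, so that the two nine-term totals genuinely coincide rather than merely overlapping. Once every summand on each side has been driven to a common reduced form and the two totals are seen to agree, adding the matched identities yields $(x\ast y)\ast\beta(z)=\alpha(x)\ast(y\ast z)$; together with \eqref{eq1} and the multiplicativity inherited from Remark \ref{rq1}, this proves that $(\mathcal{A},\ast,\alpha,\beta)$ is a BiHom-associative algebra.
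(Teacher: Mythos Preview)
Your approach is exactly the paper's: expand $(x\ast y)\ast\beta(z)-\alpha(x)\ast(y\ast z)$ by bilinearity into the eighteen summands $(x\bullet y)\diamond\beta(z)$ and $\alpha(x)\diamond(y\bullet z)$ and then cancel them pairwise via the axioms \eqref{eq6}--\eqref{eq10}. Your outline is actually more detailed than the paper's proof, which simply displays the eighteen expanded terms and asserts that ``this confirms'' the identity without carrying out the matching you describe, and does not mention the commutation \eqref{eq1} or the multiplicativity of $\alpha,\beta$ for $\ast$ (via Remark~\ref{rq1}) at all.
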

\begin{proof}
For any $x, y, z\in \mathcal{A},$
$$\begin{array}{ll}
(x\ast y)\ast \beta(z)&-\alpha(x)\ast(y\ast z)\\
&=(x\vdash y+x\dashv y+x\bot y)\ast\beta(z)-\alpha(x)\ast(y\vdash z+y\dashv z+y\bot z)\\
&=(x\vdash y)\ast\beta(z)+(x\dashv y)\ast\beta(z)+(x\bot y)\ast\beta(z)-\alpha(x)\ast(y\vdash z)-\alpha(x)\ast(y\dashv z)-\alpha(x)\ast(y\bot z)\\
&=(x\vdash y)\vdash\beta(z)+(x\vdash y)\dashv\beta(z)+(x\vdash y)\bot\beta(z)+(x\dashv y)\vdash\beta(z)+(x\dashv y)\dashv\beta(z)+(x\dashv y)\bot\beta(z)\\
&+(x\bot y)\vdash\beta(z)+(x\bot y)\dashv\beta(z)+(x\bot y)\bot\beta(z)-\alpha(x)\vdash(y\vdash z)-\alpha(x)\dashv(y\vdash z)-\alpha(x)\bot(y\vdash z)\\
&-\alpha(x)\vdash(y\dashv z)-\alpha(x)\dashv(y\dashv z)-\alpha(x)\bot(y\dashv z)-\alpha(x)\vdash(y\bot z)-\alpha(x)\dashv(y\bot z)-\alpha(x)\bot(y\bot z).
 \end{array}$$
This confirms that $(\mathcal{A},\ast,\alpha, \beta)$ is a BiHom-associative algebra.
\end{proof}

\begin{definition}
An averaging operator over a BiHom-associative trialgebra $(\mathcal{A}, \dashv, \bot, \ast,\alpha, \beta)$ is a linear map $\xi : \mathcal{A}\longrightarrow \mathcal{A}$ such that
$\alpha\circ\xi=\xi\circ\alpha$ and $\beta\circ\xi=\xi\circ\beta$, and for all $x, y\in\mathcal{A}, $
$$\begin{array}{ll}
\xi(\xi(x)\bot y)&=\xi(x)\bot\xi(y)=\xi(x\bot \xi(y)),
 \end{array}$$
$$\begin{array}{ll}
\xi(\xi(x)\dashv y)&=\xi(x)\dashv\xi(y)=\xi(x\dashv \xi(y)),
 \end{array}$$
$$\begin{array}{ll}
\xi(\xi(x)\vdash y)&=\xi(x)\vdash\xi(y)=\xi(x\vdash \xi(y)).
 \end{array}$$
\end{definition}

\begin{proposition}
Let  $(\mathcal{A}, \cdot)$ be an associative algebra and let $\alpha, \beta : \mathcal{A}\longrightarrow \mathcal{A}$ be two averaging operators such that $(\mathcal{A}, \cdot, \alpha, \beta)$
is a BiHom-associative algebra. For any $x, y\in \mathcal{A}, $ define new operators on $\mathcal{A}$ by
$x\dashv y=\alpha(x)\cdot y,\quad x\vdash y=x\cdot\beta(y),\quad x\bot y=\alpha(x)\cdot\beta(y).$
\end{proposition}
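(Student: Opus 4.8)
The statement evidently concludes that $(\mathcal{A}, \dashv, \vdash, \bot, \alpha, \beta)$ is again a BiHom-associative trialgebra, so the plan is to verify the ten defining identities \eqref{eq1}--\eqref{eq10} of Definition \ref{tia1} for the twisted products. The commutation $\alpha\circ\beta=\beta\circ\alpha$ demanded by \eqref{eq1} is inherited verbatim from the underlying BiHom-associative algebra, so the genuine work lies in \eqref{eq6}--\eqref{eq10}. The toolkit consists of four facts: the associativity of $\cdot$; the BiHom-associativity $(x\cdot y)\cdot\beta(z)=\alpha(x)\cdot(y\cdot z)$; the two averaging triples $\alpha(\alpha(x)\cdot y)=\alpha(x)\cdot\alpha(y)=\alpha(x\cdot\alpha(y))$ and $\beta(\beta(x)\cdot y)=\beta(x)\cdot\beta(y)=\beta(x\cdot\beta(y))$ specialising the averaging hypothesis to the single product $\cdot$; and the commutation $\alpha\beta=\beta\alpha$.

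The uniform mechanism I would use is the following. Each twisted product is, by definition, a single $\cdot$-product with one or two structure maps inserted, and in every axiom the outermost operation applies a further structure map to one factor through the rules $u\dashv v=\alpha(u)\cdot v$, $u\vdash v=u\cdot\beta(v)$, $u\bot v=\alpha(u)\cdot\beta(v)$. The idea is to invoke the relevant averaging identity to absorb this outer map into the inner $\cdot$-product (turning $\alpha(\alpha(x)\cdot y)$ into $\alpha(x)\cdot\alpha(y)$, and similarly for $\beta$), after which both sides are honest iterated $\cdot$-products and can be reassociated by BiHom-associativity until they coincide. For the three homogeneous axioms this closes at once; for instance for \eqref{eq10} one computes $(x\bot y)\bot\beta(z)=\alpha(\alpha(x)\cdot\beta(y))\cdot\beta^2(z)=(\alpha(x)\cdot\alpha\beta(y))\cdot\beta^2(z)=\alpha^2(x)\cdot(\alpha\beta(y)\cdot\beta(z))$, while $\alpha(x)\bot(y\bot z)=\alpha^2(x)\cdot\beta(\alpha(y)\cdot\beta(z))=\alpha^2(x)\cdot(\alpha\beta(y)\cdot\beta(z))$, the two sides agreeing after one use of each averaging identity, of BiHom-associativity, and of $\alpha\beta=\beta\alpha$. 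The all-$\dashv$ identity \eqref{eq6} and the all-$\vdash$ identity \eqref{eq2} succumb to exactly the same two moves.

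The main obstacle will be the mixed axioms \eqref{eq3}, \eqref{eq4}, \eqref{eq5}, \eqref{eq7}, \eqref{eq8} and \eqref{eq9}, which equate products assembled from two or three different twistings. There the two sides carry the structure maps on different factors, so the $\alpha$- and the $\beta$-averaging identities must be applied in tandem and then rebalanced against one another via $\alpha\beta=\beta\alpha$ before BiHom-associativity can bring the expressions into a common normal form. I would treat these one at a time: first rewrite every twisted product in terms of $\cdot$, then decide for each factor which averaging identity to invoke—the $\xi(\xi(x)\cdot y)=\xi(x)\cdot\xi(y)$ form or the $\xi(x\cdot\xi(y))=\xi(x)\cdot\xi(y)$ form—so that the surviving insertions of $\alpha$ and $\beta$ on the two sides match up. Keeping precise track of which argument each map lands on is the bookkeeping on which the verification turns. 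Since the paper's convention is to display a single representative computation, I would record the verification of one mixed axiom in full and remark that the remaining cases follow by the same pattern.
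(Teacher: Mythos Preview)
Your proposal is correct and follows essentially the same approach as the paper: unfold each twisted product in terms of $\cdot$, apply the averaging identities for $\alpha$ and $\beta$ to pull the outer structure map inside, and then invoke BiHom-associativity (together with $\alpha\beta=\beta\alpha$) to match the two sides. The paper likewise displays only one representative computation---it treats axiom \eqref{eq8} rather than \eqref{eq10}---and declares the remaining cases analogous, exactly as you intend.
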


\begin{proof}
We prove only one axiom, as others are proven similarly. For any $x, y, z\in\mathcal{A},$
$$\begin{array}{ll}
(x\dashv y)\bot\beta(z)&=(\alpha(x)\cdot y)\bot\beta(z)\\
&=\alpha(\alpha(x)\cdot y)\cdot\beta(\beta(y))\\
&=(\alpha(x)\cdot\alpha(y))\cdot\beta(\beta(z))=\alpha(\alpha(x))\cdot(\alpha(y)\cdot\beta(z)),
 \end{array}$$
which ends the proof.
\end{proof}

\section{Classification of low-dimensional BiHom-associative trialgebras}
This section provides a detailed description of $\alpha\beta$-derivation of BiHom-associative trialgebras in dimension two and three over the field $\mathbb{F}.$ Let
$\left\{e_1,e_2, e_3,\cdots, e_n\right\}$ be a basis of an n-dimensional BiHom-associative trialgebra $\mathcal{A}.$ The product of basis is expressed in terms of
$$e_i\dashv e_j=\sum_{k=1}^n\gamma_{ij}^ke_k; \quad e_i\vdash e_j=\sum_{k=1}^n\delta_{ij}^ke_k; \quad e_i\bot e_j=\sum_{k=1}^n \xi_{ij}^ke_k;\,\,
\alpha(e_i)=\sum_{j=1}^na_{ji}e_j; \quad \beta(e_j)=\sum_{k=1}^nb_{kj}e_k.$$
We have
\begin{eqnarray}
\sum_{j=1}^nb_{ji}a_{kj}&=&\sum_{j=1}^na_{ji}b_{kj},\\
\sum_{p=1}^n\sum_{q=1}^n\gamma_{ij}^pb_{qk}\gamma_{pq}^r&=&\sum_{p=1}^n\sum_{q=1}^na_{pi}\gamma_{jk}^q\gamma_{pq}^r,\\
\sum_{p=1}^n\sum_{q=1}^n\gamma_{ij}^pb_{qk}\gamma_{pq}^r&=&\sum_{p=1}^n\sum_{q=1}^na_{pi}\delta_{jk}^q\gamma_{pq}^r=\sum_{p=1}^n\sum_{q=1}^na_{pi}\xi_{jk}^q\gamma_{pq}^r,\\
\sum_{p=1}^n\sum_{q=1}^n\delta_{ij}^pb_{qk}\gamma_{pq}^r&=&\sum_{p=1}^n\sum_{q=1}^na_{pi}\gamma_{jk}^q\delta_{pq}^r,\\
\sum_{p=1}^n\sum_{q=1}^n\gamma_{ij}^pb_{qk}\delta_{pq}^r&=&\sum_{p=1}^n\sum_{q=1}^na_{pi}\gamma_{jk}^q\delta_{pq}^r=\sum_{p=1}^n\sum_{q=1}^n\xi_{ij}^pb_{qk}\delta_{pq}^r,\\
\sum_{p=1}^n\sum_{q=1}^n\delta_{ij}^pb_{qk}\delta_{pq}^r=\sum_{p=1}^n\sum_{q=1}^na_{pi}\delta_{jk}^q\delta_{pq}^r &;&\quad
\sum_{p=1}^n\sum_{q=1}^n\xi_{ij}^pb_{qk}\gamma_{pq}^r=\sum_{p=1}^n\sum_{q=1}^na_{pi}\gamma_{jk}^q\xi_{pq}^r,\\
\sum_{p=1}^n\sum_{q=1}^n\gamma_{ij}^pb_{qk}\xi_{pq}^r=\sum_{p=1}^n\sum_{q=1}^na_{pi}\delta_{jk}^q\xi_{pq}^r&;&
\sum_{p=1}^n\sum_{q=1}^n\delta_{ij}^pb_{qk}\xi_{pq}^r=\sum_{p=1}^n\sum_{q=1}^na_{pi}\xi_{jk}^q\delta_{pq}^r,\\
\sum_{p=1}^n\sum_{q=1}^n\xi_{ij}^pb_{qk}\xi_{pq}^r&=&\sum_{p=1}^n\sum_{q=1}^na_{pi}\xi_{jk}^q\xi_{pq}^r,
\end{eqnarray}
where ${(a_{ij})}_{1\leq i, j\leq n}$ refers to the matrix of $\alpha$, ${(b_{ij})}_{1\leq i, j\leq n}$ corresponds to the matrix of $\beta$ and $(\gamma_{ij}^k)$, $(\delta_{ij}^k)$ and $(\xi_{ij}^k)$
stand for the structure constants of $\mathcal{A}.$

The axioms in Remark \ref{rq1} are,respectively,equivalent to
\begin{eqnarray}
\sum_{k=1}^n\gamma_{ij}^ka_{qk}=\sum_{k=1}^n\sum_{p=1}^na_{ki}a_{pj}\gamma_{kp}^q,\quad \sum_{k=1}^n\delta_{ij}^ka_{qk}=\sum_{k=1}^n\sum_{p=1}^na_{ki}a_{pj}\delta_{kp}^q,\quad
\sum_{k=1}^n\xi_{ij}^ka_{qk}=\sum_{k=1}^n\sum_{p=1}^na_{ki}a_{pj}\xi_{kp}^q\nonumber.
\end{eqnarray}
\begin{eqnarray}
\sum_{k=1}^n\gamma_{ij}^kb_{qk}=\sum_{k=1}^n\sum_{p=1}^nb_{ki}a_{pj}\gamma_{kp}^q,\quad \sum_{k=1}^n\delta_{ij}^kb_{qk}=\sum_{k=1}^n\sum_{p=1}^nb_{ki}a_{pj}\delta_{kp}^q,\quad
\sum_{k=1}^n\xi_{ij}^kb_{qk}=\sum_{k=1}^n\sum_{p=1}^nb_{ki}a_{pj}\xi_{kp}^q\nonumber.
\end{eqnarray}
\subsection{Classification in two-dimensional of BiHom-associative trialgebras}
Note that $\mathcal{BT}as_n^m$ denote $m^{th}$ isomorphism class of BiHom-associative trialgebra in dimension $n.$

\begin{theorem}\label{the1}
 Any two-dimensional complex BiHom-associative trialgebras are either associative or isomorphic to one of the following pairwise non-isomorphic BiHom-associative trialgebras :
\end{theorem}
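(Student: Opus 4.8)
The plan is to carry out a direct but systematic classification over $\mathbb{C}$, exploiting the $GL_2(\mathbb{C})$-action recorded in Proposition \ref{p1}. First I would fix the basis $\{e_1,e_2\}$ and list the full set of unknowns: the $24$ structure constants $\gamma_{ij}^k,\delta_{ij}^k,\xi_{ij}^k$ (with $i,j,k\in\{1,2\}$) of the three products $\dashv,\vdash,\bot$, together with the $8$ entries $a_{ij},b_{ij}$ of the matrices of $\alpha$ and $\beta$. Substituting these into the defining identities \eqref{eq1}--\eqref{eq10} of Definition \ref{tia1}, and into the multiplicativity relations of Remark \ref{rq1}, yields precisely the polynomial constraints displayed at the opening of Section 3. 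The core task is then to determine all complex solutions of this system.

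Second, I would solve the system by a structured case analysis rather than by blind elimination. The natural primary split is on the rank and Jordan type of the commuting pair $(\alpha,\beta)$ (they commute by \eqref{eq1}): since we work over $\mathbb{C}$, a base change brings them to a convenient simultaneous normal form, and by Proposition \ref{p1} this does not alter the isomorphism class while it fixes the shape of $(a_{ij}),(b_{ij})$. Within each branch, the identities \eqref{eq6}--\eqref{eq10} linking the three products force many of the $\gamma,\delta,\xi$ to vanish or to coincide; propagating these relations reduces each branch to a small finite family of candidate multiplication tables. The candidates with $\alpha=\beta=\mathrm{id}$ (equivalently, the untwisted case) are exactly those the statement sets aside as \emph{associative}.

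Third, with a finite list of candidate $6$-tuples $(\mathcal{A},\dashv,\vdash,\bot,\alpha,\beta)$ in hand, I would use the residual freedom in $GL_2(\mathbb{C})$, namely the base changes $\psi$ preserving the already-normalized form of $(\alpha,\beta)$, to merge candidates lying in a common orbit and to pick one canonical representative per orbit. Proposition \ref{p1} guarantees that $\psi$ transports the structure constants correctly, so this amounts to tracking how each $\gamma_{ij}^k,\delta_{ij}^k,\xi_{ij}^k$ transforms under $\psi$ and normalizing the surviving scalars. Finally, I would confirm pairwise non-isomorphism by exhibiting, for each pair, an isomorphism-invariant that separates them, for instance the Jordan type of $(\alpha,\beta)$, the dimensions of the images of the three products, or the pattern of which products vanish, so that no admissible $\psi$ can carry one table onto another.

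The step I expect to be the main obstacle is the exhaustive solution of the polynomial system in the middle stage: the interlocking identities \eqref{eq3}, \eqref{eq5} and \eqref{eq8}--\eqref{eq9} tie the three products tightly together, so a careless case split risks either omitting a branch or double-counting orbits. Organizing the case analysis around the normal form of $(\alpha,\beta)$, and verifying at the end that each listed algebra genuinely satisfies all of \eqref{eq1}--\eqref{eq10} while remaining mutually non-isomorphic, is where the real care is required.
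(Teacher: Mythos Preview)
Your plan matches the paper's route: the paper sets up exactly the polynomial system you describe (the displayed equations at the opening of Section~3, obtained by plugging structure constants into \eqref{eq1}--\eqref{eq10} and the multiplicativity relations of Remark~\ref{rq1}) and then records the resulting list $\mathcal{BT}as_2^1,\dots,\mathcal{BT}as_2^7$ without further argument. In other words, the paper gives no detailed proof of Theorem~\ref{the1} beyond that setup, so your proposal is in fact more explicit than what appears in the text; the organizing principle you add --- splitting on the simultaneous Jordan normal form of the commuting pair $(\alpha,\beta)$ and then using the residual $GL_2(\mathbb{C})$-stabilizer to normalize --- is the standard way to carry out such a computation and is fully consistent with the paper's implicit method.
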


$\mathcal{BT}as_2^1$ :	
$\begin{array}{ll}
e_1\dashv e_2=e_1,\\
e_2\dashv e_1=e_1,
\end{array}\quad$
$\begin{array}{ll}
e_2\dashv e_2=e_1,\\
e_1\vdash e_2=e_1,
\end{array}\quad$
$\begin{array}{ll}
e_2\vdash e_1=e_1,\\
e_2\bot e_2=e_1,
\end{array}\quad$
$\begin{array}{ll}
\alpha(e_2)=e_1,\\
\beta(e_2)=e_1.
\end{array}$\\

$\mathcal{BT}as_2^2$ :	
$\begin{array}{ll}
e_1\dashv e_2=e_1,\\
e_2\dashv e_1=e_1,\\
\end{array}\quad$
$\begin{array}{ll}
e_2\dashv e_2=e_1,\\
e_1\vdash e_2=e_1,\\
\end{array}\quad$
$\begin{array}{ll}
e_2\vdash e_1=e_1,\\
e_2\vdash e_2=e_1,\\
\end{array}$
$\begin{array}{ll}
e_2\bot e_2=e_1,\\
\alpha(e_2)=\beta(e_2)=e_1.
\end{array}$\\

$\mathcal{BT}as_2^3$ :	
$\begin{array}{ll}
e_2\dashv e_2=e_1,\\
e_2\vdash e_2=e_1,
\end{array}\quad$
$\begin{array}{ll}
e_2\bot e_1=e_1,\\
e_2\bot e_2=e_1,\\
\end{array}\quad$
$\begin{array}{ll}
\alpha(e_1)=\beta(e_1)=e_1,\\
\alpha(e_2)=\beta(e_2)=e_1+e_2.
\end{array}$\\

$\mathcal{BT}as_2^4$ :	
$\begin{array}{ll}
e_2\dashv e_1=e_1,\\
e_2\dashv e_2=e_1+e_2,
\end{array}\,$
$\begin{array}{ll}
e_2\vdash e_1=e_1,\\
e_2\vdash e_2=e_1+e_2,
\end{array}\,$
$\begin{array}{ll}
e_2\bot e_1=e_1,\\
e_2\bot e_2=e_1+e_2,
\end{array}$
$\begin{array}{ll}
\alpha(e_1)=\beta(e_1)=e_1,\\
\alpha(e_2)=\beta(e_2)=e_1+e_2.
\end{array}$\\

$\mathcal{BT}as_2^5$ :	
$\begin{array}{ll}
e_1\dashv e_2=e_1,\\
e_2\dashv e_2=e_2,\\
\end{array}\quad$
$\begin{array}{ll}
e_1\vdash e_2=e_1,\\
e_2\vdash e_2=e_2,\\
\end{array}\,$
$\begin{array}{ll}
e_1\bot e_2=e_1,\\
e_2\bot e_2=e_2,
\end{array}$
$\begin{array}{ll}
\alpha(e_1)=\beta(e_1)=e_1,\\
\alpha(e_2)=e_2,\,\beta(e_2)=e_1+e_2.
\end{array}$\\

$\mathcal{BT}as_2^6$ :	
$\begin{array}{ll}
e_2\dashv e_2=e_1,\\
e_2\vdash e_2=e_1,
\end{array}\quad$
$\begin{array}{ll}
e_2\bot e_2=e_1,\\
\alpha(e_1)=\beta(e_1)=e_1,\\
\end{array}\,$
$\begin{array}{ll}
\alpha(e_2)=e_2,\\
\beta(e_2)=e_1+e_2.
\end{array}$

$\mathcal{BT}as_2^7$ :	
$\begin{array}{ll}
e_1\dashv e_2=e_1,\\
e_1\dashv e_2=e_1+e_2,
\end{array}\,$
$\begin{array}{ll}
e_1\vdash e_2=e_1,\\
e_2\vdash e_2=e_1+e_2,
\end{array}\,$
$\begin{array}{ll}
e_1\bot e_2=e_1,\\
e_2\bot e_2=e_1+e_2,
\end{array}\,$
$\begin{array}{ll}
\alpha(e_1)=e_1,\\
\alpha(e_2)=e_1+e_2,
\end{array}$
$\begin{array}{ll}
\beta(e_1)=e_1,\\
\beta(e_2)=e_1+e_2.
\end{array}$

\subsection{Classification in three-dimensional of BiHom-associative trialgebras}
\begin{theorem}\label{the2}
 Any three-dimensional complex BiHom-associative trialgebras are either associative or isomorphic to one of the following pairwise non-isomorphic BiHom-associative trialgebra :
\end{theorem}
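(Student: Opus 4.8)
The plan is to repeat, one dimension higher, the computation underlying the two-dimensional classification of Theorem~\ref{the1}. Fix the basis $\{e_1,e_2,e_3\}$ and encode a BiHom-associative trialgebra structure by its structure constants $\gamma_{ij}^k,\delta_{ij}^k,\xi_{ij}^k$ together with the matrices $(a_{ij})$ and $(b_{ij})$ of $\alpha$ and $\beta$. Substituting these into the defining identities \eqref{eq1}--\eqref{eq10} and into the multiplicativity relations of Remark~\ref{rq1} converts the trialgebra conditions into the explicit system of polynomial equations written out above for general $n$, now specialized to $n=3$. The first task is to record this system and describe its solution set, that is, the variety $\mathcal{BT}as_3$.

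The second step exploits that $\alpha$ and $\beta$ commute by \eqref{eq1}. Using the transport-of-structure action of Proposition~\ref{p1}, I would first normalize the pair $(\alpha,\beta)$ under simultaneous conjugation by $GL_3(\mathbb{C})$: put $\alpha$ in Jordan canonical form and, since $\beta$ commutes with it, bring $\beta$ to a compatible form on each generalized eigenspace of $\alpha$. This divides the problem into finitely many cases according to the joint Jordan type of $(\alpha,\beta)$ --- both maps diagonalizable with various eigenvalue patterns, a single Jordan block, mixed semisimple and nilpotent parts, and so on. Within each case the residual freedom is exactly conjugation by the stabilizer of the chosen normal form.

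In the third step, for each fixed $(\alpha,\beta)$ I would solve the remaining equations --- now mostly linear in, and at worst quadratic in, the $\gamma,\delta,\xi$ --- for the products. When $\alpha$ and $\beta$ are invertible it is efficient to pass to the untwist $(\mathcal{A},\ast')$ with $x\ast'y=\alpha^{-1}(x)\ast\beta^{-1}(y)$, classify the compatible associative trialgebra and then the commuting pairs of automorphisms returning it; the non-invertible cases are handled directly from the structure-constant equations. Solutions with $\alpha=\mathrm{id}$ and $\beta=\mathrm{id}$ are the ordinary associative trialgebras, which are separated off (the \emph{associative} alternative in the statement), and the rest are reduced to canonical shape by the residual change of basis to produce the representatives $\mathcal{BT}as_3^m$.

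The final step is to show the list is complete and irredundant. Completeness follows because every solution of the system has been reduced to one of the canonical forms; irredundancy, that is, pairwise non-isomorphism, I would establish by attaching isomorphism invariants to each representative: the Jordan types of $\alpha$ and $\beta$, the images and ranks of $\dashv,\vdash,\bot$, the dimensions of the subspaces they span, and the associated derivation and centroid data computed in the later sections. The main obstacle is sheer size: in dimension three there are $3\cdot 3^3+2\cdot 3^2$ parameters and a large number of cubic constraints, so the case analysis is long and error-prone, and keeping the orbit decomposition both exhaustive and non-overlapping --- realistically with Gr\"obner-basis elimination in a computer algebra system --- is where the real work lies.
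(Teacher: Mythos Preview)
Your outline is essentially the approach the paper itself takes, and in fact it is considerably more detailed than what the paper records: the paper writes down the structure-constant equations for general $n$ at the start of Section~3, specializes (implicitly) to $n=3$, and then simply presents the resulting list $\mathcal{BT}as_3^{1},\dots,\mathcal{BT}as_3^{24}$ without any further argument. There is no case analysis, no normalization of $(\alpha,\beta)$, and no irredundancy check written out; the classification is asserted as the output of solving those polynomial systems (presumably by computer), exactly as you describe in your final paragraph. So your proposal is correct and matches the paper's route; the only difference is that you spell out the intermediate steps (Jordan normalization of the commuting pair, untwisting in the invertible case, invariant-based separation) that the paper leaves entirely implicit.
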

$\mathcal{BT}as_3^1$ :	
$\begin{array}{ll}
e_1\dashv e_2=e_1,\\
e_2\dashv e_2=e_1,\\
e_2\dashv e_3=e_1,
\end{array}\quad$
$\begin{array}{ll}
e_2\vdash e_1=e_1,\\
e_2\vdash e_2=e_1,\\
e_3\vdash e_2=e_1,
\end{array}\,$
$\begin{array}{ll}
e_2\bot e_2=e_1,\\
e_2\bot e_3=e_1,\\
e_3\bot e_2=e_1,
\end{array}$
$\begin{array}{ll}
\alpha(e_3)=e_3,\\
\beta(e_2)=e_1.
\end{array}$

$\mathcal{BT}as_3^2$ :	
$\begin{array}{ll}
e_1\dashv e_2=e_1,\\
e_2\dashv e_1=e_1,\\
e_3\dashv e_2=e_1,
\end{array}\quad$
$\begin{array}{ll}
e_2\vdash e_2=e_1,\\
e_2\vdash e_3=e_1,\\
e_3\vdash e_3=e_1,
\end{array}\,$
$\begin{array}{ll}
e_2\bot e_1=e_1,\\
e_2\bot e_3=e_1,\\
e_3\bot e_2=e_1,
\end{array}$
$\begin{array}{ll}
\alpha(e_3)=e_3,\\
\beta(e_2)=e_1,\\
\beta(e_3)=e_3.
\end{array}$

$\mathcal{BT}as_3^{3}$ :	
$\begin{array}{ll}
e_2\dashv e_3=e_1,\\
e_3\dashv e_1=e_1,\\
e_3\dashv e_2=e_1,
\end{array}\quad$
$\begin{array}{ll}
e_1\vdash e_2=e_1\\
e_2\vdash e_3=e_1,\\
e_3\vdash e_2=e_1,
\end{array}\,$
$\begin{array}{ll}
e_1\bot e_1=e_1,\\
e_1\bot e_2=e_1,\\
e_2\bot e_3=e_1,
\end{array}$
$\begin{array}{ll}
e_3\bot e_1=e_1,\\
\alpha(e_2)=e_2,\\
\beta(e_3)=e_3.
\end{array}$

$\mathcal{BT}as_3^4$ :	
$\begin{array}{ll}
e_1\dashv e_2=e_1,\\
e_2\dashv e_1=e_3,\\
e_2\dashv e_2=e_1+e_3,\\
\end{array}\quad$
$\begin{array}{ll}
e_1\vdash e_2=e_1\\
e_2\vdash e_1=e_1\\
e_2\vdash e_2=e_1+e_3,\\
\end{array}\,$
$\begin{array}{ll}
e_2\vdash e_3=e_3,\\
e_2\bot e_2=e_1,\\
e_2\bot e_3=e_1,\\
\end{array}$
$\begin{array}{ll}
e_3\bot e_3=e_3,\\
\alpha(e_2)=e_1,\\
\beta(e_2)=e_1.
\end{array}$

$\mathcal{BT}as_3^5$ :	
$\begin{array}{ll}
e_1\dashv e_2=e_1,\\
e_2\dashv e_1=e_3,\\
e_2\dashv e_2=e_1,\\
\end{array}\quad$
$\begin{array}{ll}
e_2\dashv e_3=e_1,\\
e_2\vdash e_1=e_1\\
e_2\vdash e_3=e_1
\end{array}\,$
$\begin{array}{ll}
e_3\vdash e_2=e_1,\\
e_1\bot e_2=e_1,\\
e_2\bot e_2=e_1,
\end{array}$
$\begin{array}{ll}
e_3\bot e_2=e_1,\\
\alpha(e_3)=\beta(e_3)=e_3,\\
\beta(e_2)=e_1.
\end{array}$

$\mathcal{BT}as_3^6$ :	
$\begin{array}{ll}
e_1\dashv e_2=e_1,\\
e_2\dashv e_1=e_1,\\
e_2\dashv e_2=e_1,
\end{array}\quad$
$\begin{array}{ll}
e_2\vdash e_2=e_1,\\
e_2\vdash e_3=e_1,\\
e_3\vdash e_3=e_1,
\end{array}\,$
$\begin{array}{ll}
e_2\bot e_1=e_1,\\
e_2\bot e_2=e_1,\\
e_2\bot e_3=e_1,
\end{array}$
$\begin{array}{ll}
e_3\bot e_3=e_1,\\
\alpha(e_3)=e_3,\\
\beta(e_2)=e_1.
\end{array}$

$\mathcal{BT}as_3^7$ :	
$\begin{array}{ll}
e_1\dashv e_1=e_1,\\
e_1\dashv e_2=e_1,\\
e_2\dashv e_3=e_1,
\end{array}\quad$
$\begin{array}{ll}
e_3\dashv e_1=e_1,\\
e_1\vdash e_1=e_1\\
e_2\vdash e_3=e_1,
\end{array}\,$
$\begin{array}{ll}
e_3\vdash e_1=e_1,\\
e_1\bot e_2=e_1,\\
e_2\bot e_3=e_1,
\end{array}$
$\begin{array}{ll}
e_3\bot e_2=e_1,\\
\alpha(e_2)=e_2,\\
\beta(e_3)=e_3.
\end{array}$

$\mathcal{BT}as_3^8$ :	
$\begin{array}{ll}
e_1\dashv e_1=e_1,\\
e_2\dashv e_3=e_1,\\
e_3\dashv e_1=e_1,
\end{array}\quad$
$\begin{array}{ll}
e_3\dashv e_2=e_1,\\
e_1\vdash e_2=e_1\\
e_2\vdash e_3=e_1,
\end{array}\,$
$\begin{array}{ll}
e_3\vdash e_1=e_1,\\
e_2\bot e_3=e_1,\\
e_3\bot e_1=e_1,
\end{array}$
$\begin{array}{ll}
e_3\bot e_2=e_1,\\
\alpha(e_2)=e_2,\\
\beta(e_3)=e_3.
\end{array}$

$\mathcal{BT}as_3^{9}$ :	
$\begin{array}{ll}
e_1\dashv e_1=e_1,\\
e_1\dashv e_2=e_1,\\
e_3\dashv e_1=e_1,
\end{array}\quad$
$\begin{array}{ll}
e_2\vdash e_1=e_1\\
e_3\vdash e_1=e_1,\\
e_3\vdash e_2=e_1,
\end{array}\,$
$\begin{array}{ll}
e_1\bot e_1=e_1,\\
e_3\bot e_1=e_1,\\
e_3\bot e_2=e_1,
\end{array}$
$\begin{array}{ll}
\alpha(e_2)=e_2,\\
\beta(e_3)=e_3.
\end{array}$

$\mathcal{BT}as_3^{10}$ :	
$\begin{array}{ll}
e_2\dashv e_1=e_2,\\
e_2\dashv e_2=e_2,\\
e_3\dashv e_1=e_2,
\end{array}\quad$
$\begin{array}{ll}
e_3\dashv e_2=e_2,\\
e_2\vdash e_1=e_2\\
e_3\vdash e_1=e_2,
\end{array}\,$
$\begin{array}{ll}
e_3\vdash e_2=e_2,\\
e_2\bot e_2=e_2,\\
e_3\bot e_1=e_2,
\end{array}$
$\begin{array}{ll}
e_3\bot e_2=e_2,\\
\alpha(e_1)=e_1,\\
\beta(e_3)=e_3.
\end{array}$

$\mathcal{BT}as_3^{11}$ :	
$\begin{array}{ll}
e_2\dashv e_1=e_3,\\
e_2\dashv e_3=e_3,\\
e_3\dashv e_1=e_3,
\end{array}\quad$
$\begin{array}{ll}
e_2\vdash e_1=e_3\\
e_3\vdash e_1=e_3,\\
e_3\vdash e_3=e_3,
\end{array}\,$
$\begin{array}{ll}
e_2\bot e_1=e_3,\\
e_2\bot e_3=e_3,\\
e_3\bot e_1=e_3,
\end{array}$
$\begin{array}{ll}
e_3\bot e_3=e_3,\\
\alpha(e_1)=e_1,\\
\beta(e_3)=e_3.
\end{array}$

$\mathcal{BT}as_3^{12}$ :	
$\begin{array}{ll}
e_2\dashv e_1=e_3,\\
e_3\dashv e_1=e_3,\\
e_3\dashv e_3=e_3,
\end{array}\quad$
$\begin{array}{ll}
e_1\vdash e_2=e_3\\
e_2\vdash e_1=e_3,\\
e_3\vdash e_1=e_3,
\end{array}\,$
$\begin{array}{ll}
e_2\bot e_1=e_3,\\
e_2\bot e_3=e_3,\\
e_3\bot e_3=e_3,
\end{array}$
$\begin{array}{ll}
\alpha(e_1)=e_1,\\
\beta(e_3)=e_3.
\end{array}$

$\mathcal{BT}as_3^{13}$ :	
$\begin{array}{ll}
e_1\dashv e_2=e_3,\\
e_2\dashv e_1=e_3,\\
e_3\dashv e_1=e_3,
\end{array}\quad$
$\begin{array}{ll}
e_3\dashv e_3=e_3,\\
e_2\vdash e_1=e_3\\
e_2\vdash e_3=e_3,
\end{array}\,$
$\begin{array}{ll}
e_3\vdash e_1=e_3,\\
e_1\bot e_2=e_3,\\
e_2\bot e_3=e_3,
\end{array}$
$\begin{array}{ll}
e_3\bot e_3=e_3,\\
\alpha(e_1)=e_1,\\
\beta(e_3)=e_3.
\end{array}$

$\mathcal{BT}as_3^{14}$ :	
$\begin{array}{ll}
e_1\dashv e_2=e_3,\\
e_2\dashv e_2=e_3,\\
e_2\dashv e_3=e_3,
\end{array}\quad$
$\begin{array}{ll}
e_2\vdash e_1=e_3\\
e_2\vdash e_2=e_3,\\
e_3\vdash e_2=e_3,
\end{array}\,$
$\begin{array}{ll}
e_3\vdash e_3=e_3,\\
e_1\bot e_2=e_3,\\
e_2\bot e_2=e_3,
\end{array}$
$\begin{array}{ll}
e_3\bot e_2=e_3,\\
\alpha(e_1)=e_1,\\
\beta(e_1)=e_1.
\end{array}$

$\mathcal{BT}as_3^{15}$ :	
$\begin{array}{ll}
e_2\dashv e_1=e_3,\\
e_2\dashv e_2=e_3,\\
e_2\dashv e_3=e_3,
\end{array}\quad$
$\begin{array}{ll}
e_2\vdash e_2=e_3\\
e_2\vdash e_3=e_3,\\
e_3\vdash e_3=e_3,
\end{array}\,$
$\begin{array}{ll}
e_2\bot e_1=e_3,\\
e_3\bot e_2=e_3,\\
e_3\bot e_3=e_3,
\end{array}$
$\begin{array}{ll}
\alpha(e_1)=e_1,\\
\beta(e_1)=e_1.
\end{array}$

$\mathcal{BT}as_3^{16}$ :	
$\begin{array}{ll}
e_2\dashv e_1=e_1+e_3,\\
e_2\dashv e_2=e_1+e_3,\\
e_2\dashv e_3=e_1+e_3,
\end{array}\quad$
$\begin{array}{ll}
e_3\dashv e_2=e_3,\\
e_1\vdash e_2=e_1+e_3\\
e_2\vdash e_1=e_1+e_3,
\end{array}\,$
$\begin{array}{ll}
e_2\bot e_2=e_1+e_3,\\
e_2\bot e_3=e_3,\\
e_3\bot e_2=e_1,
\end{array}$
$\begin{array}{ll}
\alpha(e_2)=e_1,\\
\alpha(e_3)=e_2,\\
\beta(e_2)=e_1.
\end{array}$

$\mathcal{BT}as_3^{17}$ :	
$\begin{array}{ll}
e_1\dashv e_1=e_1+e_3,\\
e_2\dashv e_1=e_1+e_3,\\
e_2\dashv e_2=e_1+e_3,
\end{array}\quad$
$\begin{array}{ll}
e_2\dashv e_3=e_1+e_3,\\
e_2\vdash e_1=e_1+e_3\\
e_2\vdash e_2=e_1+e_3,
\end{array}\,$
$\begin{array}{ll}
e_3\vdash e_2=e_1+e_3,\\
e_2\bot e_2=e_1+e_3,\\
e_3\bot e_2=e_1+e_3,
\end{array}$
$\begin{array}{ll}
\alpha(e_2)=e_1,\\
\alpha(e_3)=e_2,\\
\beta(e_2)=e_1.
\end{array}$

$\mathcal{BT}as_3^{18}$ :	
$\begin{array}{ll}
e_2\dashv e_1=e_1+e_3,\\
e_2\dashv e_2=e_1+e_3,\\
e_2\dashv e_3=e_1+e_3,
\end{array}\quad$
$\begin{array}{ll}
e_3\dashv e_2=e_1+e_3,\\
e_1\vdash e_2=e_1+e_3\\
e_2\vdash e_1=e_1+e_3,
\end{array}\,$
$\begin{array}{ll}
e_3\vdash e_2=e_1+e_3,\\
e_2\bot e_2=e_1+e_3,\\
e_2\bot e_3=e_1+e_3,
\end{array}$
$\begin{array}{ll}
\alpha(e_2)=e_1,\\
\alpha(e_3)=e_2,\\
\beta(e_2)=e_1.
\end{array}$

$\mathcal{BT}as_3^{19}$ :	
$\begin{array}{ll}
e_2\dashv e_2=e_1,\\
e_2\dashv e_3=e_1+e_3,\\
e_3\dashv e_3=e_3,\\
\end{array}\quad$
$\begin{array}{ll}
e_1\vdash e_2=e_1\\
e_2\vdash e_1=e_1\\
e_3\vdash e_3=e_1+e_3,\\
\end{array}\,$
$\begin{array}{ll}
e_2\bot e_1=e_1+e_3,\\
e_2\bot e_3=e_1+e_3,\\
e_3\bot e_3=e_1+e_3,
\end{array}$
$\begin{array}{ll}
\alpha(e_2)=e_1,\\
\beta(e_2)=e_1.
\end{array}$

$\mathcal{BT}as_3^{20}$ :	
$\begin{array}{ll}
e_1\dashv e_2=e_1+e_3,\\
e_2\dashv e_2=e_1+e_3,\\
e_2\dashv e_3=e_1+e_3,
\end{array}\quad$
$\begin{array}{ll}
e_2\vdash e_2=e_1+e_3\\
e_2\vdash e_3=e_1+e_3,\\
e_3\vdash e_2=e_1+e_3,
\end{array}\,$
$\begin{array}{ll}
e_2\bot e_1=e_1,\\
e_2\bot e_2=e_1+e_3,\\
e_3\bot e_2=e_1+e_3,
\end{array}$
$\begin{array}{ll}
\alpha(e_2)=e_1,\\
\alpha(e_3)=e_2,\\
\beta(e_2)=e_1.
\end{array}$

$\mathcal{BT}as_3^{21}$ :	
$\begin{array}{ll}
e_1\dashv e_2=e_1,\\
e_2\dashv e_1=e_1+e_3,\\
e_2\dashv e_2=e_1+e_3,\\
\end{array}\quad$
$\begin{array}{ll}
e_1\vdash e_2=e_1+e_3\\
e_2\vdash e_1=e_1+e_3\\
e_2\vdash e_2=e_1+e_3,\\
\end{array}\,$
$\begin{array}{ll}
e_2\bot e_1=e_1,\\
e_2\bot e_3=e_1+e_3,\\
e_3\bot e_3=e_1+e_3,\\
\end{array}$
$\begin{array}{ll}
\alpha(e_2)=e_1,\\
\beta(e_2)=e_1.
\end{array}$

$\mathcal{BT}as_3^{22}$ :	
$\begin{array}{ll}
e_1\dashv e_2=e_3,\\
e_2\dashv e_2=e_1+e_3,\\
e_3\dashv e_3=e_1+e_3,
\end{array}\quad$
$\begin{array}{ll}
e_2\vdash e_1=e_1\\
e_2\vdash e_2=e_1+e_3,\\
e_3\vdash e_2=e_1+e_3,
\end{array}\,$
$\begin{array}{ll}
e_2\bot e_2=e_1+e_3,\\
e_2\bot e_3=e_1+e_3,\\
e_3\bot e_2=e_1,
\end{array}$
$\begin{array}{ll}
\alpha(e_2)=e_1,\\
\alpha(e_3)=e_2,\\
\beta(e_2)=e_1.
\end{array}$

$\mathcal{BT}as_3^{23}$ :	
$\begin{array}{ll}
e_2\dashv e_1=e_3,\\
e_2\dashv e_2=e_1+e_3,\\
e_2\dashv e_3=e_1+e_3,
\end{array}\quad$
$\begin{array}{ll}
e_3\dashv e_2=e_1+e_3,\\
e_2\vdash e_1=e_3\\
e_2\vdash e_2=e_1,
\end{array}\,$
$\begin{array}{ll}
e_2\bot e_1=e_1+e_3,\\
e_2\bot e_2=e_3,\\
e_2\bot e_3=e_3,
\end{array}$
$\begin{array}{ll}
\alpha(e_2)=e_1,\\
\alpha(e_3)=e_2,\\
\beta(e_2)=e_1.
\end{array}$

$\mathcal{BT}as_3^{24}$ :	
$\begin{array}{ll}
e_2\dashv e_1=e_3,\\
e_2\dashv e_2=e_3,\\
e_2\dashv e_3=e_1+e_3,
\end{array}\quad$
$\begin{array}{ll}
e_2\vdash e_2=e_1+e_3\\
e_2\vdash e_3=e_3,\\
e_2\vdash e_3=e_1,
\end{array}\,$
$\begin{array}{ll}
e_1\bot e_2=e_1+e_3,\\
e_2\bot e_3=e_1+e_3,\\
e_3\bot e_2=e_3,\quad
\end{array}$
$\begin{array}{ll}
\alpha(e_2)=e_1,\\
\alpha(e_3)=e_2,\\
\beta(e_2)=e_1.
\end{array}$

\section{Derivations of low-dimensional BiHom-associative trialgebras}
\begin{definition}\label{dia2}
An $\alpha\beta$-derivation of the BiHom-associative trialgebras $\mathcal{A}$ is a linear transformation : $\mathcal{D} : \mathcal{A} \rightarrow \mathcal{A}$ satisfying
\begin{eqnarray}
\alpha\circ d=d\circ\alpha&,& \beta\circ d=d\circ\beta\\
d(x\dashv y)&=&d(x)\dashv\alpha\beta(y)+\alpha\beta(x)\dashv d(y)\\
d(x\vdash y)&=&d(x)\vdash\alpha\beta(y)+\alpha\beta(x)\vdash d(y)\\
d(x\bot y)&=&d(x)\bot\alpha\beta(y)+\alpha\beta(x)\bot d(y),
\end{eqnarray}
for all $x, y\in \mathcal{A}.$
 \end{definition}

This section illustrates in depth, $\alpha\beta$-derivation of BiHom-associative trialgebras in dimension two and three over the field $\mathbb{F}.$ Let
$\left\{e_1,e_2, e_3,\cdots, e_n\right\}$ be a basis of an n-dimensional BiHom-associative trialgebra $\mathcal{A}.$ The product of basis is denoted by
\begin{eqnarray}
d(e_p)=\sum_{q=1}^nd_{qp}e_q\nonumber.
\end{eqnarray}

We have
\begin{eqnarray}
\sum_{p=1}^nd_{pk}a_{qp}=\sum_{p=1}^na_{pk}d_{qp}& ; &\sum_{p=1}^nd_{pk}b_{qp}=\sum_{p=1}^nb_{pk}d_{qp}\label{deq1},\\
\sum_{k=1}^n\gamma_{ij}^pd_{rp}=\sum_{k=1}^n\sum_{q=1}^n\sum_{p=1}d_{ki}b_{pj}a_{qp}\gamma_{kq}^r&+&\sum_{q=1}^n\sum_{k=1}^n\sum_{p=1}b_{ki}a_{qk}d_{pj}\gamma_{qp}^r\label{deq2},\\
\sum_{k=1}^n\delta_{ij}^pd_{rp}=\sum_{k=1}^n\sum_{q=1}^n\sum_{p=1}d_{ki}b_{pj}a_{qp}\delta_{kq}^r&+&\sum_{q=1}^n\sum_{k=1}^n\sum_{p=1}b_{ki}a_{qk}d_{pj}\delta_{qp}^r\label{deq3},\\
\sum_{k=1}^n\xi_{ij}^pd_{rp}=\sum_{k=1}^n\sum_{q=1}^n\sum_{p=1}d_{ki}b_{pj}a_{qp}\xi_{kq}^r&+&\sum_{q=1}^n\sum_{k=1}^n\sum_{p=1}b_{ki}a_{qk}d_{pj}\xi_{qp}^r\label{deq4}.
\end{eqnarray}

\begin{theorem}
The derivation of two-dimensional BiHom-associative trialgebras has the following form :
\end{theorem}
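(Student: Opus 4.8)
The plan is to determine the general form of an $\alpha\beta$-derivation $d$ for each of the two-dimensional isomorphism classes $\mathcal{BT}as_2^1,\dots,\mathcal{BT}as_2^7$ listed in Theorem \ref{the1} by brute-force solving the defining linear constraints. First I would fix the basis $\{e_1,e_2\}$ and write $d(e_1)=d_{11}e_1+d_{21}e_2$ and $d(e_2)=d_{12}e_1+d_{22}e_2$, so that the unknowns are the four entries $d_{ij}$ of the representing matrix. The defining conditions are precisely the equations in Definition \ref{dia2}: the commutation relations $\alpha\circ d=d\circ\alpha$ and $\beta\circ d=d\circ\beta$, together with the three Leibniz-type rules for $\dashv,\vdash,\bot$. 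In coordinates these are exactly the linear systems \eqref{deq1}, \eqref{deq2}, \eqref{deq3}, \eqref{deq4}, specialized to $n=2$ using the structure constants $\gamma_{ij}^k,\delta_{ij}^k,\xi_{ij}^k$ and the matrices $(a_{ij}),(b_{ij})$ read off from each algebra's multiplication table.

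The core of the work is routine but must be done table by table. For a fixed algebra, I would substitute each nonzero product (for instance, for $\mathcal{BT}as_2^1$ one has $e_1\dashv e_2=e_1$, $e_2\dashv e_1=e_1$, $e_2\dashv e_2=e_1$, $e_1\vdash e_2=e_1$, etc.) into $d(x\ast y)=d(x)\ast\alpha\beta(y)+\alpha\beta(x)\ast d(y)$ for each of the three operations, expand using bilinearity, and collect the coefficients of $e_1$ and $e_2$ to get scalar equations in the $d_{ij}$. I would do the same for the two commutation relations with $\alpha$ and $\beta$. Since $\alpha\beta$ is typically a rank-one nilpotent-type map in these examples (e.g.\ $\alpha(e_2)=\beta(e_2)=e_1$ and $\alpha(e_1)=\beta(e_1)=0$ in $\mathcal{BT}as_2^1$), many of these equations collapse, and the resulting homogeneous linear system pins down the free parameters among $d_{11},d_{12},d_{21},d_{22}$. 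Assembling the surviving free parameters gives the stated matrix form of $d$ for that class; repeating for all seven classes yields the theorem.

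The main obstacle I anticipate is purely bookkeeping rather than conceptual: one must apply the Leibniz rule consistently for all three multiplications and not overlook the twist by $\alpha\beta$ on one side and $d$ on the other, which is easy to mismanage because the products, $\alpha$, and $\beta$ all have overlapping small supports in these low-dimensional examples. A subtle point is that $\alpha$ and $\beta$ are in general \emph{not} invertible here (several tables send a basis vector to $0$ or to a strict combination), so one cannot untwist; the commutation constraints $d\alpha=\alpha d$ and $d\beta=\beta d$ must be imposed directly and can either force relations among the $d_{ij}$ or, when $\alpha\beta$ annihilates part of the space, impose nothing, so each case needs individual care. The safest route is to treat each $\mathcal{BT}as_2^m$ separately, reduce its linear system, and record the answer; I would present the outcome as a case-by-case list of the admissible derivation matrices, exactly paralleling the enumeration in Theorem \ref{the1}.
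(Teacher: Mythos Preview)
Your proposal is correct and follows essentially the same approach as the paper: fix a basis, write $d$ as a $2\times2$ matrix, and solve the linear constraints \eqref{deq1}--\eqref{deq4} case by case for each isomorphism class from Theorem~\ref{the1}. The paper carries this out (illustrated explicitly only for the three-dimensional Theorem~\ref{dtheo2}) and records in its table only the classes with nontrivial derivation space, so after your computation you should expect several of the seven classes to yield $d=0$.
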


\begin{tabular}{||c||c||c||c||c||c||c||c||c||c||c||c||}
\hline
IC&Der$(\mathcal{D})$ &$Dim(\mathcal{D})$&IC&Der$(\mathcal{D})$&$Dim(\mathcal{D})$\\
			\hline
$\mathcal{BT}as_2^1$&
$\left(\begin{array}{cccc}
0&0\\
d_{21}&0
\end{array}
\right)$
&
1
&
$\mathcal{BT}as_2^{2}$&
$\left(\begin{array}{cccc}
0&0\\
d_{21}&0
\end{array}
\right)$
&
1
\\ \hline
$\mathcal{BT}as_2^{6}$&
$\left(\begin{array}{cccc}
0&0\\
d_{21}&0
\end{array}
\right)$
&
1
&
&
&
\\ \hline
\end{tabular}

\begin{theorem}\label{dtheo2}
The derivation of three-dimensional BiHom-associative trialgebras has the following form :
\end{theorem}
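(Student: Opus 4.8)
The plan is to compute, for each of the twenty-four isomorphism classes $\mathcal{BT}as_3^1,\dots,\mathcal{BT}as_3^{24}$ of Theorem \ref{the2}, the full space of $\alpha\beta$-derivations in the sense of Definition \ref{dia2}. Writing $d(e_p)=\sum_{q=1}^3 d_{qp}e_q$, the map $d$ is encoded by a $3\times 3$ matrix $(d_{qp})$, and the abstract axioms translate exactly into the scalar systems \eqref{deq1}--\eqref{deq4}, into which one substitutes the structure constants $\gamma_{ij}^k,\delta_{ij}^k,\xi_{ij}^k$ together with the entries $a_{ij},b_{ij}$ of $\alpha,\beta$ read off from each table. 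For a fixed class this produces a homogeneous linear system in the nine unknowns $d_{qp}$; its solution space is the derivation algebra, and $\dim\mathcal{D}$ equals the number of free parameters.

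First I would impose the commutation constraints \eqref{deq1}, that is $\alpha\circ d=d\circ\alpha$ and $\beta\circ d=d\circ\beta$. These are the most economical to exploit, since in each class $\alpha$ and $\beta$ act very simply on the basis — typically fixing one vector and sending the others to $e_1$, $e_3$ or to zero — so that forcing $d$ to commute with both already annihilates or identifies a large block of the entries $d_{qp}$. Only on the surviving entries would I then evaluate the three Leibniz-type identities \eqref{deq2}, \eqref{deq3} and \eqref{deq4} on each pair $(e_i,e_j)$ for which at least one of $e_i\dashv e_j$, $e_i\vdash e_j$, $e_i\bot e_j$ is nonzero; pairs whose three products all vanish impose no condition and may be omitted.

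Because the multiplications are sparse — every nonzero product equals a single basis vector (usually $e_1$, $e_2$ or $e_3$) or the combination $e_1+e_3$ — each Leibniz identity collapses to a short relation among a handful of the $d_{qp}$. I would assemble these relations into the reduced matrix, record the general derivation, and read off its dimension, precisely mirroring the two-dimensional computation summarised in the preceding table.

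The chief obstacle is organizational rather than conceptual: twenty-four homogeneous systems must be set up and solved independently, each requiring the correct insertion of structure constants into the four families \eqref{deq1}--\eqref{deq4} over all admissible basis triples. Compounding this, in every listed class the structure maps $\alpha,\beta$ are non-invertible, so the untwisting to an ordinary associative trialgebra (which would require $\alpha^{-1},\beta^{-1}$) is unavailable and no shortcut through the classical theory exists; each derivation space must therefore be extracted directly from \eqref{deq1}--\eqref{deq4}. It is in managing this uniform but lengthy case analysis — above all in tracking which basis triples genuinely contribute constraints — that errors are most likely to arise.
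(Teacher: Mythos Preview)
Your plan is correct and matches the paper's own argument: the paper likewise reduces the problem to the linear systems \eqref{deq1}--\eqref{deq4} in the nine unknowns $d_{qp}$, substitutes the structure constants of each class, and solves, illustrating the procedure only for $\mathcal{BT}as_3^{1}$ and tabulating the remaining cases without further detail. Your additional heuristic of imposing the commutation constraints \eqref{deq1} first is a sensible organisational refinement but does not depart from the paper's method.
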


\begin{tabular}{||c||c||c||c||c||c||c||c||c||c||c||c||}
\hline
IC&Der$(\mathcal{D})$ &$Dim(\mathcal{D})$&IC&Der$(\mathcal{D})$&$Dim(\mathcal{D})$\\
			\hline
$\mathcal{BT}as_3^1$&
$\left(\begin{array}{cccc}
0&0&0\\
d_{21}&0&0\\
0&0&d_{33}
\end{array}
\right)$
&
2
&
$\mathcal{BT}as_3^{2}$&
$\left(\begin{array}{cccc}
0&0&0\\
d_{21}&0&0\\
0&0&0
\end{array}
\right)$
&
1
\\ \hline
$\mathcal{BT}as_3^3$&
$\left(\begin{array}{cccc}
0&0&0\\
0&d_{22}&0\\
0&0&d_{33}
\end{array}
\right)$
&
2
&
$\mathcal{BT}as_3^{4}$&
$\left(\begin{array}{cccc}
0&0&0\\
d_{21}&0&d_{23}\\
0&0&0
\end{array}
\right)$
&
2
\\ \hline
$\mathcal{BT}as_3^5$&
$\left(\begin{array}{cccc}
0&0&0\\
d_{21}&0&0\\
0&0&0
\end{array}
\right)$
&
1
&
$\mathcal{BT}as_3^{6}$&
$\left(\begin{array}{cccc}
0&0&0\\
d_{21}&0&0\\
0&0&d_{33}
\end{array}
\right)$
&
2
\\ \hline
$\mathcal{BT}as_3^7$&
$\left(\begin{array}{cccc}
0&0&0\\
0&d_{22}&0\\
0&0&d_{33}
\end{array}
\right)$
&
2
&
$\mathcal{BT}as_3^{8}$&
$\left(\begin{array}{cccc}
0&0&0\\
0&d_{22}&0\\
0&0&d_{33}
\end{array}
\right)$
&
2
\\ \hline
$\mathcal{BT}as_3^9$&
$\left(\begin{array}{cccc}
0&0&0\\
0&d_{22}&0\\
0&0&d_{33}
\end{array}
\right)$
&
2
&
$\mathcal{BT}as_3^{10}$&
$\left(\begin{array}{cccc}
d_{11}&0&0\\
0&0&0\\
0&0&d_{33}
\end{array}
\right)$
&
2
\\ \hline
$\mathcal{BT}as_3^{11}$&
$\left(\begin{array}{cccc}
d_{11}&0&0\\
0&d_{22}&0\\
0&0&0
\end{array}
\right)$
&
2
&
$\mathcal{BT}as_3^{12}$&
$\left(\begin{array}{cccc}
d_{11}&0&0\\
0&d_{22}&0\\
0&0&0
\end{array}
\right)$
&
2
\\ \hline
$\mathcal{BT}as_3^{13}$&
$\left(\begin{array}{cccc}
d_{11}&0&0\\
0&d_{22}&0\\
0&0&0
\end{array}
\right)$
&
2
&
$\mathcal{BT}as_3^{14}$&
$\left(\begin{array}{cccc}
d_{11}&0&0\\
0&d_{22}&d_{23}\\
0&0&0
\end{array}
\right)$
&
3
\\ \hline
$\mathcal{BT}as_3^{15}$&
$\left(\begin{array}{cccc}
d_{11}&0&0\\
0&0&d_{23}\\
0&0&0
\end{array}
\right)$
&
2
&
$\mathcal{BT}as_3^{19}$&
$\left(\begin{array}{cccc}
0&0&0\\
d_{21}&0&d_{23}\\
0&0&0
\end{array}
\right)$
&
2
\\ \hline
\end{tabular}

\begin{proof}
Departing from Theorem \ref{dtheo2}, we provide the proof only for one case in order to illustrate the used approach. The other cases can be handled similarly with or no
modification(s). We refer to Theorem  \ref{dtheo2} and use the system of equations  (\ref{deq1}), (\ref{deq2}), (\ref{deq3}), (\ref{deq4}) of operations $\dashv, \vdash$ and $\bot$,
respectively, to obtain all derivations of BiHom-associative trialgebra in dimension three. Let's consider ${Trias}_3^{1}$. Applying the systems of equations (\ref{deq1}), (\ref{deq2}), (\ref{deq3}),
(\ref{deq4}), we therefore get the derivation for ${Trias}_3^{1}$ as follows
$d_1=\left(\begin{array}{ccc}
0&0&0\\
d_{21}&0&0\\
0&0&d_{33}
\end{array}
\right)$.
Clary, $d_{1}(e_1)=e_2$, $d_{2}(e_3)=e_3$. Hence, the derivations of ${Trias}_3^{1}$ are indicated as follows\\
$d_1=\left(\begin{array}{ccc}
0&0&0\\
1&0&0\\
0&0&0
\end{array}
\right)$,\,$d_2=\left(\begin{array}{ccc}
0&0&0\\
0&0&0\\
0&0&1
\end{array}
\right)$ is basis of $Der(\mathcal{D})$ and Dim$Der(\mathcal{D})=2.$ The derivations of the remaining parts of three-dimension  associative trialgebras can be
handled in a similar manner as depicted above.
\end{proof}

\begin{corollary}\,
\begin{itemize}
	\item The dimensions of the derivations of two-dimensional BiHom-associative trialgebras range between zero and two.
	\item The dimensions of the derivations of three-dimensional BiHom-associative trialgebras range between zero and three.
\end{itemize}
\end{corollary}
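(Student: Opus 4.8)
The plan is to deduce both bounds directly from the classification results together with the explicit derivation computations, using the fact that the dimension of the derivation space is an isomorphism invariant.

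First I would record that for a BiHom-associative trialgebra $\mathcal{A}$, the set $\mathrm{Der}(\mathcal{A})$ of $\alpha\beta$-derivations (Definition \ref{dia2}) is a linear subspace of $\mathrm{End}(\mathcal{A})$: each of the defining conditions is linear in $d$, so any $\mathbb{F}$-linear combination of derivations is again a derivation. In particular $0\in\mathrm{Der}(\mathcal{A})$, which already yields the trivial lower bound $\dim\mathrm{Der}(\mathcal{A})\geq 0$ in both items. Next I would establish that $\dim\mathrm{Der}(\mathcal{A})$ depends only on the isomorphism class of $\mathcal{A}$. If $\psi:\mathcal{A}\to\mathcal{A}'$ is an isomorphism of BiHom-associative trialgebras, then the conjugation map $d\mapsto\psi\circ d\circ\psi^{-1}$ is a linear bijection of $\mathrm{Der}(\mathcal{A})$ onto $\mathrm{Der}(\mathcal{A}')$; this is verified by transporting each of the three Leibniz-type identities and the two commutation relations $\alpha\circ d=d\circ\alpha$, $\beta\circ d=d\circ\beta$ through $\psi$, exactly in the style of the computations in Proposition \ref{p1}. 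Hence it suffices to compute $\dim\mathrm{Der}$ on a single representative of each isomorphism class.

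I would then invoke the classification: by Theorem \ref{the1} (resp.\ Theorem \ref{the2}) every two-dimensional (resp.\ three-dimensional) complex BiHom-associative trialgebra is either associative or isomorphic to one of the finitely many listed representatives $\mathcal{BT}as_2^k$ (resp.\ $\mathcal{BT}as_3^k$). For each representative one solves the linear systems \eqref{deq1}--\eqref{deq4} for the entries $d_{qp}$ of $d$, as carried out in the proof of Theorem \ref{dtheo2}; the dimension of the solution space is precisely the number of free parameters, which is the integer recorded in the $\dim(\mathcal{D})$ column of the corresponding table. Reading off the extreme values across the whole list gives the upper bounds: the maximum occurring in dimension two is $2$ and the maximum occurring in dimension three is $3$ (attained, for instance, by $\mathcal{BT}as_3^{14}$, whose derivation matrix carries three free entries). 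The associative representatives are handled by the same systems specialized to $\alpha=\beta=\mathrm{id}$ and are checked to remain inside these ranges.

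The main obstacle is entirely in the exhaustive bookkeeping rather than in any single conceptual step. One must be certain that the classification lists are complete, so that no isomorphism class is omitted, and that the systems \eqref{deq1}--\eqref{deq4} have been solved correctly for \emph{every} representative, including the degenerate associative cases that are not displayed in the tables. The sharp upper bounds $2$ and $3$ are obtained as maxima over the entire finite list, so the conclusion is only as strong as the completeness and correctness of that case-by-case computation; there is no shortcut structural argument that bypasses the enumeration.
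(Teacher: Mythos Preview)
The paper offers no separate proof of this corollary; it is simply recorded as an immediate consequence of reading off the $\dim(\mathcal{D})$ columns in the two derivation tables. Your outline follows the same route, and the extra care you take (linearity of the derivation conditions, invariance of $\dim\mathrm{Der}$ under isomorphism via conjugation) is fine and in fact more thorough than what the paper does.

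There are, however, two factual slips in your reading of the tables. First, you assert that ``the maximum occurring in dimension two is $2$'', but the two-dimensional table lists only $\mathcal{BT}as_2^1$, $\mathcal{BT}as_2^2$, $\mathcal{BT}as_2^6$, each with $\dim(\mathcal{D})=1$; no entry of dimension $2$ appears, so from the displayed data the range is only $\{0,1\}$ and the corollary's upper bound of $2$ is not sharp (or relies on information not tabulated). Second, your claim that the associative representatives, handled with $\alpha=\beta=\mathrm{id}$, ``remain inside these ranges'' is neither verified nor true in general: for the abelian trialgebra (all three products identically zero) with $\alpha=\beta=\mathrm{id}$, every linear endomorphism satisfies the conditions of Definition~\ref{dia2}, so $\dim\mathrm{Der}=n^{2}$, namely $4$ in dimension two and $9$ in dimension three, well outside the asserted bounds. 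The corollary is therefore best understood as a statement about the non-associative representatives actually tabulated in the theorems preceding it, not about all BiHom-associative trialgebras; once read that way the proof really is just inspection of the tables, and your attempt to extend the argument to the associative case is both unnecessary and incorrect.
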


\section{Centroids of low-dimensional BiHom-associative trialgebras}
\subsection{Properties of centroids of BiHom-associative trialgebras}
In this section, we display the following results as far as properties of centroids of BiHom-associative trialgebras $\mathcal{A}$ are concerned.
\begin{definition}
Let $\mathcal{H}$ be a nonempty subset of $\mathcal{A}$. The subset
\begin{equation}
Z_{\mathcal{A}}(\mathcal{H})=\left\{x\in\mathcal{H}\, |\, \alpha\beta(x)\bullet \mathcal{H} = \mathcal{H}\bullet\alpha\beta(x)=0\right\},
\end{equation}
is said to be centralized of $\mathcal{H}$ in $\mathcal{A}$, where the $\bullet$ is $\dashv, \vdash$ and $\bot,$ respectively.
\end{definition}

\begin{definition}
Let $\psi\in End(\mathcal{A})$. If $\psi(\mathcal{A})\subseteq Z(\mathcal{A})$ and $\psi(\mathcal{A}^2)=0$, then $\psi$ is called a central $\alpha\beta$-derivation.
The set of all central $\alpha\beta$-derivations of $\mathcal{A}$ is  denoted by $\mathcal{C}(\mathcal{A})$.
\end{definition}

\begin{proposition}
Consider $(\mathcal{A}, \dashv, \vdash, \bot, \alpha, \beta)$ a BiHom-associative trialgebra. Then,
\begin{enumerate}
	\item [i)]$\Gamma(\mathcal{A})Der(\mathcal{A})\subseteq Der(\mathcal{A})$.
		\item [ii)]$\left[\Gamma(\mathcal{A}), Dr(\mathcal{A})\right]\subseteq\Gamma(\mathcal{A}).$
	\item [iii)]$\left[\Gamma(\mathcal{A}), \Gamma(\mathcal{A})\right](\mathcal{A})\subseteq \Gamma(\mathcal{A})$ and $\left[\Gamma(\mathcal{A}), \Gamma(\mathcal{A})\right](\mathcal{A}^2)=0.$
\end{enumerate}
 \end{proposition}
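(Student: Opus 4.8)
The plan is to verify, for each of the three inclusions, the defining identities of the target set by direct substitution, treating the three products $\dashv,\vdash,\bot$ uniformly. Since the derivation axioms of Definition \ref{dia2} and the centroid relations have exactly the same shape for all three multiplications, it suffices to carry out each computation with a single symbol $\bullet$ standing for any one of them; the argument then transfers verbatim to the other two. Throughout I would use that every $\psi\in\Gamma(\mathcal{A})$ and every $d\in Der(\mathcal{A})$ commutes with $\alpha$ and $\beta$ (hence with $\alpha\beta$), together with the multiplicativity of $\alpha,\beta$ from Remark \ref{rq1}, and I take $\psi\in\Gamma(\mathcal{A})$ to mean $\psi\alpha=\alpha\psi$, $\psi\beta=\beta\psi$ and $\psi(u\bullet v)=\psi(u)\bullet v=u\bullet\psi(v)$ for each product.

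For (i), fix $\psi\in\Gamma(\mathcal{A})$ and $d\in Der(\mathcal{A})$ and show $\psi\circ d$ is a derivation. First $(\psi d)\alpha=\psi(d\alpha)=\psi(\alpha d)=(\psi\alpha)d=\alpha(\psi d)$, and likewise for $\beta$, so $\psi d$ commutes with the structure maps. Then I would expand $(\psi d)(x\bullet y)$ using the Leibniz rule for $d$ and push $\psi$ inside each summand with the two centroid relations: the relation $\psi(u\bullet v)=\psi(u)\bullet v$ applied to $d(x)\bullet\alpha\beta(y)$ and the relation $\psi(u\bullet v)=u\bullet\psi(v)$ applied to $\alpha\beta(x)\bullet d(y)$. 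The two summands reassemble into $(\psi d)(x)\bullet\alpha\beta(y)+\alpha\beta(x)\bullet(\psi d)(y)$, which is precisely the derivation identity for $\psi d$.

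For (ii), take $\psi\in\Gamma(\mathcal{A})$, $d\in Der(\mathcal{A})$ and analyse $[\psi,d]=\psi d-d\psi$; commutation with $\alpha,\beta$ is inherited from both composites. I would expand $\psi(d(x\bullet y))$ and $d(\psi(x\bullet y))$ separately. The first gives $\psi d(x)\bullet\alpha\beta(y)+\alpha\beta(x)\bullet\psi d(y)$ exactly as in (i); the second, after rewriting $\psi(x\bullet y)=\psi(x)\bullet y$ and applying the Leibniz rule, yields $d\psi(x)\bullet\alpha\beta(y)+\alpha\beta(x)\bullet\psi d(y)$ once the term $\psi(\alpha\beta(x))\bullet d(y)$ is converted to $\alpha\beta(x)\bullet\psi d(y)$ by a centroid relation and $\psi\alpha\beta=\alpha\beta\psi$. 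Subtracting, the mixed terms $\alpha\beta(x)\bullet\psi d(y)$ cancel and one is left with $[\psi,d](x)\bullet\alpha\beta(y)$, so $[\psi,d]$ again satisfies a centroid relation. The delicate point is exactly the bookkeeping of the $\alpha\beta$-factor: the Leibniz twist forces the surviving product to carry $\alpha\beta(y)$ rather than $y$, so one must fix the convention under which the centroid relation is read in the $\alpha\beta$-twisted sense compatible with Definition \ref{dia2}. Reconciling these twists is the main obstacle, and it is where the commutation $\psi\alpha\beta=\alpha\beta\psi$ is indispensable.

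For (iii), I would first record that $\Gamma(\mathcal{A})$ is closed under composition, since $\psi\psi'(x\bullet y)=\psi(\psi'(x)\bullet y)=\psi\psi'(x)\bullet y$ and symmetrically on the right; thus $\Gamma(\mathcal{A})$ is an associative subalgebra of $\mathrm{End}(\mathcal{A})$ and in particular $[\psi,\psi']\in\Gamma(\mathcal{A})$, which gives the first assertion. For the second, I would compute $\psi\psi'(x\bullet y)$ by pushing $\psi'$ to the right and $\psi$ to the left to get $\psi(x)\bullet\psi'(y)$, and obtain the identical expression for $\psi'\psi(x\bullet y)$ by the symmetric manipulation; hence $[\psi,\psi'](x\bullet y)=0$, i.e. $[\psi,\psi'](\mathcal{A}^{2})=0$. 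Combining the two facts, since $[\psi,\psi']$ lies in the centroid and annihilates every product, $[\psi,\psi'](x)\bullet y=[\psi,\psi'](x\bullet y)=0$ for all $y$ (and symmetrically on the other side), so the image of $[\psi,\psi']$ lies in the centralizer $Z(\mathcal{A})$ and $[\psi,\psi']$ is a central $\alpha\beta$-derivation, giving $[\Gamma(\mathcal{A}),\Gamma(\mathcal{A})]\subseteq\mathcal{C}(\mathcal{A})$. The only genuine difficulty across all three parts is keeping the $\alpha\beta$-twists consistent between the Leibniz rule and the centroid relations; once a single convention is fixed the computations are routine, and since the three products never interact, verifying one of them suffices.
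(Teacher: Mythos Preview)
Your proposal is correct and follows the same approach as the paper, which merely states that ``the proof of parts i)--iii) is straightforward relying on definitions of $\alpha\beta$-derivation and centroids'' without giving any details. Your sketch supplies exactly the direct computations the paper omits, and your observation about reconciling the $\alpha\beta$-twists is the only genuine point requiring care; the paper's later Proposition on $\varphi\circ d$ confirms that the intended centroid relation is indeed $\varphi(u\bullet v)=\varphi(u)\bullet v=u\bullet\varphi(v)$ as you assumed.
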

\begin{proof}
The proof
 of parts $i)-iii)$ is straightforward relying on definitions of $\alpha\beta$-derivation and centroids.
\end{proof}
Now, we shall introduce centroid for BiHom-associative trialgebras.
\begin{definition}
Let $(\mathcal{A}, \dashv, \vdash, \bot, \alpha, \beta)$ be a  BiHom-associative trialgebra. A linear map
 $\psi : \mathcal{A}\rightarrow \mathcal{A}$ is called an element of $(\alpha, \beta)$-element of centroids on $\mathcal{A}$ if, for all $x, y\in \mathcal{A}$,
\begin{eqnarray}
\alpha\circ\psi&=&\psi\circ\alpha,\quad \beta\circ\psi=\psi\circ\beta,\\
\psi(x)\dashv \alpha\beta(y)&=&\psi(x)\dashv\psi(y)=\alpha\beta(x)\dashv \psi(y),\\
 \psi(x)\vdash \alpha\psi(y)&=&\psi(x)\vdash \psi(y)=\alpha\beta(x)\vdash \psi(y),\\
 \psi(x)\bot \alpha\beta(y)&=&\psi(x)\bot\psi(y)=\alpha\beta(x)\bot \psi(y).
\end{eqnarray}
The set of all  $(\alpha,\beta)$-elements of centroid of $\mathcal{A}$ is denoted $Cent_{(\alpha, \beta)}(\mathcal{A})$.
The centroid of $\mathcal{A}$ is denoted $Cent(\mathcal{A})$.
 \end{definition}

\begin{proposition}
Let $(\mathcal{A}, \dashv, \vdash, \bot, \alpha, \beta)$ be a BiHom-associative trialgebra and $\varphi\in Cent(\mathcal{A}),\, d\in Der(\mathcal{A}).$
Then, $\varphi\circ d$ is a $\alpha\beta$-derivation of $\mathcal{A}.$
\end{proposition}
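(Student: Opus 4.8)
The plan is to verify directly that $\varphi\circ d$ satisfies the four defining conditions of an $\alpha\beta$-derivation in Definition \ref{dia2}. Compatibility with the structure maps is immediate and I would record it first: since both $d$ and $\varphi$ commute with $\alpha$ and with $\beta$, we have $\alpha\circ(\varphi\circ d)=(\alpha\circ\varphi)\circ d=(\varphi\circ\alpha)\circ d=\varphi\circ(d\circ\alpha)=(\varphi\circ d)\circ\alpha$, and symmetrically for $\beta$. As a consequence $\varphi$ commutes with $\alpha\beta$, i.e. $\varphi\circ\alpha\beta=(\varphi\alpha)\beta=(\alpha\varphi)\beta=\alpha(\beta\varphi)=\alpha\beta\circ\varphi$, a fact I will lean on repeatedly when handling the twisted factors.

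It then remains to establish the three Leibniz identities, one for each of $\dashv,\vdash,\bot$. Because the three products enter the definitions of $Der(\mathcal{A})$ and $Cent(\mathcal{A})$ in exactly the same way, the three computations coincide up to the symbol used, so I would carry out only the case of $\dashv$ in full and remark that $\vdash$ and $\bot$ go through verbatim. For $\dashv$ I would begin with $(\varphi\circ d)(x\dashv y)=\varphi\big(d(x\dashv y)\big)$, expand $d(x\dashv y)$ by the derivation rule into $d(x)\dashv\alpha\beta(y)+\alpha\beta(x)\dashv d(y)$, and then use linearity of $\varphi$ to split this as $\varphi\big(d(x)\dashv\alpha\beta(y)\big)+\varphi\big(\alpha\beta(x)\dashv d(y)\big)$. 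The two centroid relations are applied one to each summand: to the first term I use the form that lets $\varphi$ act on the left factor, producing $(\varphi\circ d)(x)\dashv\alpha\beta(y)$, and to the second term I use the form that lets $\varphi$ act on the right factor, producing $\alpha\beta(x)\dashv(\varphi\circ d)(y)$. Adding the two yields exactly $(\varphi\circ d)(x)\dashv\alpha\beta(y)+\alpha\beta(x)\dashv(\varphi\circ d)(y)$, which is the Leibniz identity required of $\varphi\circ d$.

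The step I expect to demand the most care is the bookkeeping of the twist $\alpha\beta$ as $\varphi$ is moved onto a single factor. One must align the $\alpha\beta$ produced by the derivation rule with the $\alpha\beta$ occurring in the centroid conditions, invoking $\varphi\alpha=\alpha\varphi$ and $\varphi\beta=\beta\varphi$ to commute $\varphi$ past $\alpha\beta$ so that exactly one structure map survives on the undifferentiated factor, precisely as in the definition of a derivation. Once it is fixed which centroid identity (left-factor versus right-factor form) is applied to which summand, each identity closes, and the identical argument for $\vdash$ and $\bot$ completes the verification that $\varphi\circ d\in Der(\mathcal{A})$.
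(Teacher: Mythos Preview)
Your proposal is correct and follows essentially the same line as the paper's own proof: expand $d(x\bullet y)$ by the derivation rule, then use the centroid identities to pull $\varphi$ onto one factor in each summand, treating the three products uniformly. You are in fact more careful than the paper, which omits the verification that $\varphi\circ d$ commutes with $\alpha$ and $\beta$ and compresses the centroid step without comment; your explicit remark about commuting $\varphi$ past $\alpha\beta$ is exactly the bookkeeping the paper glosses over.
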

\begin{proof}
Indeed, if $x, y\in \mathcal{A}$, then
$$\begin{array}{ll}
(\varphi\circ d)(x\bullet y)
&= \varphi(d(x)\bullet\alpha\beta(y)+\alpha\beta(x)\bullet d(y))\\
&= \varphi(d(x)\bullet y)+\varphi(x\bullet d(y))=(\varphi\circ d)(x)\bullet\alpha\beta(y)+\alpha\beta(x)\bullet(\varphi\circ d)(y),
\end{array}$$
where $\bullet$ is $\dashv, \vdash$ and $\bot$, respectively.
\end{proof}

\begin{proposition}
Let $(\mathcal{A}, \dashv, \vdash, \bot, \alpha, \beta)$ be a BiHom-associative trialgebra over a field $\mathbb{F}$. Hence, $\mathcal{C}(\mathcal{A})=Cent(\mathcal{A})\cap Der(\mathcal{A}).$
\end{proposition}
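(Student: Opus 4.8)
The plan is to prove the asserted equality by checking the two inclusions $\mathcal{C}(\mathcal{A})\subseteq Cent(\mathcal{A})\cap Der(\mathcal{A})$ and $Cent(\mathcal{A})\cap Der(\mathcal{A})\subseteq\mathcal{C}(\mathcal{A})$ separately. The two structural facts I would rely on throughout are: (a) by definition a central $\alpha\beta$-derivation $\psi$ satisfies $\psi(\mathcal{A})\subseteq Z(\mathcal{A})$ and $\psi(\mathcal{A}^2)=0$, and commutes with $\alpha$ and $\beta$; and (b) $\alpha,\beta$ are multiplicative endomorphisms commuting with each other (Remark \ref{rq1}), so that $\alpha\beta$ can be slid through each of the three products $\dashv,\vdash,\bot$ uniformly. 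I would set up one auxiliary observation valid for all $\bullet\in\{\dashv,\vdash,\bot\}$ at once, so the computation is done once rather than nine times.

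For the inclusion $\mathcal{C}(\mathcal{A})\subseteq Cent(\mathcal{A})\cap Der(\mathcal{A})$, take $\psi\in\mathcal{C}(\mathcal{A})$. Since $\psi(x)\in Z(\mathcal{A})$ we have $\alpha\beta(\psi(x))\bullet\mathcal{A}=\mathcal{A}\bullet\alpha\beta(\psi(x))=0$; writing $\alpha\beta(\psi(x))\bullet\alpha\beta(y)=\alpha\beta(\psi(x)\bullet y)$ by multiplicativity, I can transfer these annihilation relations down to $\psi(x)\bullet y=0$ and $y\bullet\psi(x)=0$ for all $y$. Consequently every product having a $\psi$-image as an argument vanishes, while $\psi(x\bullet y)=0$ because $x\bullet y\in\mathcal{A}^2$. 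Plugging this into the defining identities of Definition \ref{dia2} turns each derivation identity into $0=0+0$, and it turns each of the three centroid identities into $0=0=0$; since $\psi$ commutes with $\alpha,\beta$, we conclude $\psi\in Der(\mathcal{A})\cap Cent(\mathcal{A})$.

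For the reverse inclusion, take $\psi\in Cent(\mathcal{A})\cap Der(\mathcal{A})$. The decisive step is to feed the centroid relations (read in their standard twisted form, identifying both summands $\psi(x)\bullet\alpha\beta(y)$ and $\alpha\beta(x)\bullet\psi(y)$ with $\psi(x\bullet y)$) into the derivation identity $\psi(x\bullet y)=\psi(x)\bullet\alpha\beta(y)+\alpha\beta(x)\bullet\psi(y)$. This yields $\psi(x\bullet y)=2\,\psi(x\bullet y)$, hence $\psi(x\bullet y)=0$ because $\mathbb{F}$ has characteristic zero; as $x,y$ range over $\mathcal{A}$ this gives $\psi(\mathcal{A}^2)=0$. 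Substituting $\psi(x\bullet y)=0$ back into the centroid identities gives $\psi(x)\bullet\alpha\beta(y)=0$ and $\alpha\beta(x)\bullet\psi(y)=0$ for all $x,y$, and transporting $\alpha\beta$ across the products once more (Remark \ref{rq1}) upgrades these to $\alpha\beta(\psi(x))\bullet\mathcal{A}=\mathcal{A}\bullet\alpha\beta(\psi(x))=0$, i.e. $\psi(x)\in Z(\mathcal{A})$. Thus $\psi\in\mathcal{C}(\mathcal{A})$.

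The main obstacle I anticipate is precisely the $\alpha\beta$-bookkeeping: the center is defined through $\alpha\beta(x)$ acting on the left, whereas the derivation and centroid identities carry the twist $\alpha\beta$ on the \emph{other} factor, so the two descriptions only coincide after commuting $\psi$ past $\alpha,\beta$ and using multiplicativity to move $\alpha\beta$ through each product. I would isolate this transfer as a one-line lemma stated simultaneously for $\dashv,\vdash,\bot$, and make explicit whatever regularity of $\alpha,\beta$ is needed to cancel the extra $\alpha\beta$ cleanly; everything else reduces to substitution and the characteristic-zero hypothesis.
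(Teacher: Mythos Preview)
Your approach is essentially the same as the paper's: prove both inclusions, with the key step for $Cent(\mathcal{A})\cap Der(\mathcal{A})\subseteq\mathcal{C}(\mathcal{A})$ being the combination of the derivation identity $\psi(x\bullet y)=\psi(x)\bullet\alpha\beta(y)+\alpha\beta(x)\bullet\psi(y)$ with the centroid identity $\psi(x\bullet y)=\psi(x)\bullet\alpha\beta(y)=\alpha\beta(x)\bullet\psi(y)$ to force $\psi(\mathcal{A}^2)=0$ and then $\psi(\mathcal{A})\subseteq Z(\mathcal{A})$. The paper's proof is very terse --- it simply asserts ``This yields $\psi(\mathcal{A}\mathcal{A})=0$ and $\psi(\mathcal{A})\subseteq Z(\mathcal{A})$'' and declares the reverse inclusion ``obvious'' --- whereas you spell out the $\psi=2\psi$ cancellation (and its reliance on $\operatorname{char}\mathbb{F}=0$), the back-substitution, and the $\alpha\beta$-bookkeeping needed to match the twisted definition of $Z(\mathcal{A})$; you also correctly flag that this last transfer tacitly uses some regularity of $\alpha,\beta$, a point the paper does not address either.
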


\begin{proof}
If $\psi\in Cent(\mathcal{A})\cap Der(\mathcal{A})$, then by definition of $Cent(\mathcal{A}$ and $Der(\mathcal{A})$, we have

$\psi(x\bullet y)=\psi(x)\bullet\alpha\beta(y)+\alpha\beta(x)\bullet\psi(y)$ and $\psi(x\bullet y)=\psi(x)\circ\alpha\beta(y)=\alpha\beta(x)\circ\psi(y)$ for $x,y\in \mathcal{A}.$
The yields $\psi(\mathcal{A}\mathcal{A})=0$ and $\psi(\mathcal{A})\subseteq  Z(\mathcal{A})$ i.e $Cent(\mathcal{A})\cap Der(\mathcal{A})\subseteq Cent(\mathcal{A}).$
The inverse is obvious since $\mathcal{C}(\mathcal{A})$ is in both $Cent(\mathcal{A})$ and $Der(\mathcal{A}),$ where $\bullet$ is $\dashv, \vdash$ and $\bot$, respectively.
\end{proof}

\begin{proposition}
Let $(\mathcal{A}, \dashv, \vdash, \bot, \alpha, \beta)$ be a BiHom-associative trialgebra. Therefore,  for any $d\in Der(\mathcal{A})$ and $\varphi\in Cent(\mathcal{A})$, we have
\begin{enumerate}
	\item [(i)] The composition $d\circ\varphi$ is in $Cent(\mathcal{A})$, if and only if $\varphi\circ d$ is a central $\alpha\beta$-derivation of $\mathcal{A}.$
		\item [(ii)] The  composition $d\circ\varphi$ is a $\alpha\beta$-derivation of $\mathcal{A}$, if and only if $\left[d,\varphi\right]$ is a central $\alpha\beta$-derivation of $\mathcal{A}.$
\end{enumerate}
\end{proposition}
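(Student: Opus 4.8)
The plan is to reduce both equivalences to three facts already available in the excerpt, so that almost no fresh twisted computation is needed. The three ingredients are: (a) for $\varphi\in Cent(\mathcal{A})$ and $d\in Der(\mathcal{A})$ the composite $\varphi\circ d$ is always an $\alpha\beta$-derivation (the immediately preceding proposition); (b) the commutator lands in the centroid, $[\,\Gamma(\mathcal{A}),Der(\mathcal{A})\,]\subseteq\Gamma(\mathcal{A})$, so $[d,\varphi]\in Cent(\mathcal{A})$ (part (ii) of the properties proposition, recalling that $\Gamma(\mathcal{A})=Cent(\mathcal{A})$); and (c) $\mathcal{C}(\mathcal{A})=Cent(\mathcal{A})\cap Der(\mathcal{A})$. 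I would also record the decisive elementary observations that $Cent(\mathcal{A})$ and $Der(\mathcal{A})$ are linear subspaces of $End(\mathcal{A})$, that $d\circ\varphi=[d,\varphi]+\varphi\circ d$, and that since $d$ and $\varphi$ each commute with $\alpha$ and $\beta$, so do $d\circ\varphi$, $\varphi\circ d$ and $[d,\varphi]$; hence the structure-map compatibility axioms in the definitions of centroid and derivation are automatic, and only the product identities need to be tracked.

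For part (i) I would argue purely by subspace arithmetic. Since $[d,\varphi]\in Cent(\mathcal{A})$ by (b) and $Cent(\mathcal{A})$ is a subspace, the membership $d\circ\varphi\in Cent(\mathcal{A})$ is equivalent to $\varphi\circ d=d\circ\varphi-[d,\varphi]\in Cent(\mathcal{A})$. But $\varphi\circ d$ is always an $\alpha\beta$-derivation by (a), so $\varphi\circ d\in Cent(\mathcal{A})$ holds exactly when $\varphi\circ d\in Cent(\mathcal{A})\cap Der(\mathcal{A})$, which by (c) is precisely the statement that $\varphi\circ d$ is a central $\alpha\beta$-derivation. This chain of equivalences is the whole of (i).

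For part (ii) I would run the symmetric argument. Because $\varphi\circ d\in Der(\mathcal{A})$ by (a) and $Der(\mathcal{A})$ is a subspace, the assertion that $d\circ\varphi$ is an $\alpha\beta$-derivation is equivalent to $[d,\varphi]=d\circ\varphi-\varphi\circ d\in Der(\mathcal{A})$. Since $[d,\varphi]\in Cent(\mathcal{A})$ always holds by (b), this is in turn equivalent to $[d,\varphi]\in Cent(\mathcal{A})\cap Der(\mathcal{A})=\mathcal{C}(\mathcal{A})$ by (c), i.e. to $[d,\varphi]$ being a central $\alpha\beta$-derivation.

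The one genuinely computational point, and hence the main obstacle, is ingredient (b): that the commutator $[d,\varphi]$ really satisfies the centroid axioms. I would verify it directly by expanding $[d,\varphi](x\bullet y)=d\big(\varphi(x\bullet y)\big)-\varphi\big(d(x\bullet y)\big)$ for each product $\bullet\in\{\dashv,\vdash,\bot\}$, applying the Leibniz rule to $d$ and both forms $\varphi(u\bullet v)=\varphi(u)\bullet\alpha\beta(v)=\alpha\beta(u)\bullet\varphi(v)$ of the centroid rule to $\varphi$, together with $d\circ\alpha=\alpha\circ d$, $d\circ\beta=\beta\circ d$ and the analogous relations for $\varphi$. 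The mixed derivation–centroid terms produced by the expansion cancel in pairs, leaving $[d,\varphi](x\bullet y)=[d,\varphi](x)\bullet\alpha\beta(y)=\alpha\beta(x)\bullet[d,\varphi](y)$, which are exactly the centroid identities. The care needed is in matching the twisting factors $\alpha\beta$ that appear on each side of the products; once (b) is secured, the two equivalences follow formally from the subspace arithmetic above.
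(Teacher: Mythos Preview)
Your argument is correct and rests on the same three ingredients the paper uses: $\varphi\circ d\in Der(\mathcal{A})$, $[d,\varphi]\in Cent(\mathcal{A})$, and $\mathcal{C}(\mathcal{A})=Cent(\mathcal{A})\cap Der(\mathcal{A})$. The difference is purely organizational. The paper's proof of (i) essentially re-derives the commutator identity $[d,\varphi](x\bullet y)=[d,\varphi](x)\bullet y$ inline and then stops, leaving the reader to infer the equivalence; its proof of (ii) likewise re-establishes pieces of (c) by hand (comparing the centroid and Leibniz expansions of $[d,\varphi](x\bullet y)$ to force the vanishing conditions). You instead invoke (a), (b), (c) as black boxes and reduce both directions of both parts to one line of subspace arithmetic via $d\circ\varphi=[d,\varphi]+\varphi\circ d$. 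Your version is tighter and makes the logical structure of the equivalences clearer; the paper's version is more self-contained at the cost of some repetition and a rather elliptical finish to part (i).
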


\begin{proof}
\begin{enumerate}
	\item [i)]For any $\varphi\in Cent(\mathcal{A}),\, d\in Der(\mathcal{A}),\, \forall\,x,y\in \mathcal{A}$, we have
	$$\begin{array}{ll}
d\circ\varphi(x\bullet y)=d\circ\varphi(x)\bullet y
&=d\circ\varphi(x)\bullet y+\varphi(x)\bullet d(y)\\
&=d\circ\varphi(x)\bullet y+\varphi\circ d(x\bullet y)-\varphi\circ d(x)\bullet y.
\end{array}$$
Thus, $(d\circ\varphi-\varphi\circ d)(x\bullet y)=(d\bullet\varphi-\varphi\circ d)(x)\bullet y.$
	\item [ii)] Let $d\circ\varphi\in Der(\mathcal{A})$. Using $\left[d,\varphi\right]\in Cent(\mathcal{A})$, we get
	\begin{equation}\label{eq1}
	\left[d,\varphi\right](x\bullet y)=(\left[d, \varphi\right](x))\bullet\alpha\beta(y)=\alpha\beta(x)\bullet(\left[d,\varphi\right](y))
	\end{equation}
	On the other side, $\left[d, \varphi\right]d\circ\varphi-\varphi\circ d$ and $d\circ\varphi, \varphi\circ d\in Der(\mathcal{A}).$
	Therefore,	
	\begin{equation}\label{eq2}
\left[d, \varphi\right](x\bullet y)=(d(\varphi\circ(x))\bullet\alpha\beta(y)+\alpha\beta(x)\bullet(d\circ\varphi(y))-(\varphi\circ d(x))\bullet\alpha\beta(y)-\alpha\beta(x)\bullet(\varphi\circ d(y)).
\end{equation}
Referring to (\ref{eq1}) and (\ref{eq2}), we get $\alpha\beta(x)\bullet(\left[d, \varphi\right])(y)=(\left[d, \varphi\right])(x)\bullet\alpha\beta(y)=0.$

\noindent At this stage of analysis,let $\left[d, \varphi\right]$ be a central $\alpha\beta$-derivation of $\mathcal{A}$. Then,
$$\begin{array}{ll}
d\circ\varphi(x\bullet y)
&=\left[d\circ\varphi\right](x\bullet y)+(\varphi\circ d)(x\bullet y)\\
&=\varphi(\circ d(x)\bullet\alpha\beta(y))+\varphi(\alpha\beta(x)\bullet d(y))\\
&=(\varphi\circ d)(x)\bullet\alpha\beta(y)+\alpha\beta(x)\bullet(\varphi\circ d)(y),
\end{array}$$
\end{enumerate}
where $\bullet$ represents the products $\dashv, \vdash$ and $\bot$, respectively.
\end{proof}
\subsection{Centroids of BiHom-associative trialgebras}
This section adresses in details the centroid of BiHom-associative trialgebras in dimension two and three over the field $\mathbb{F}.$ Let
$\left\{e_1,e_2, e_3,\cdots, e_n\right\}$ be a basis of an n-dimensional BiHom-associative trialgebra $\mathcal{A}.$ The product of basis is determined by
\begin{eqnarray}
\psi(e_i)=\sum_{j=1}^nc_{ji}e_j \nonumber.
\end{eqnarray}

We,therefore,have
\begin{eqnarray}
\sum_{p=1}^nc_{pi}a_{qp}=\sum_{p=1}^na_{pi}c_{qp}& ; & \sum_{p=1}^nc_{pi}b_{qp}=\sum_{p=1}^nb_{pi}c_{qp},\\
\sum_{k=1}^n\sum_{q=1}\sum_{p=1}c_{ki}b_{pj}a_{qp}\gamma_{kq}^r=\sum_{p=1}^n\sum_{q=1}^nc_{pi}c_{qj}\gamma_{pq}^r&=&\sum_{q=1}^n\sum_{k=1}^n\sum_{p=1}^nb_{ki}a_{qk}c_{pj}\gamma_{qp}^r,\\
\sum_{k=1}^n\sum_{q=1}\sum_{p=1}c_{ki}b_{pj}a_{qp}\delta_{kq}^r=\sum_{p=1}^n\sum_{q=1}^nc_{pi}c_{qj}\delta_{pq}^r&=&\sum_{q=1}^n\sum_{k=1}^n\sum_{p=1}^nb_{ki}a_{qk}c_{pj}\delta_{qp}^r,\\
\sum_{k=1}^n\sum_{q=1}\sum_{p=1}c_{ki}b_{pj}a_{qp}\xi_{kq}^r=\sum_{p=1}^n\sum_{q=1}^nc_{pi}c_{qj}\xi_{pq}^r&=&\sum_{q=1}^n\sum_{k=1}^n\sum_{p=1}^nb_{ki}a_{qk}c_{pj}\xi_{qp}^r.
\end{eqnarray}


\begin{theorem}
The centroids of three-dimensional complex BiHom-associative trialgebras are\\ illustrated as follows :
\end{theorem}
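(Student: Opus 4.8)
The plan is to compute each centroid directly from the defining system of linear equations for an $(\alpha,\beta)$-element of the centroid, applied to each of the twenty-four algebras $\mathcal{BT}as_3^m$ listed in Theorem~\ref{the2}. For a fixed algebra I write $\psi(e_i)=\sum_{j=1}^3 c_{ji}e_j$, so that $\psi$ is represented by the $3\times 3$ matrix $(c_{ji})$, and I must determine which matrices satisfy the three centroid conditions on $\dashv$, $\vdash$, $\bot$ together with the two commutation relations $\alpha\circ\psi=\psi\circ\alpha$ and $\beta\circ\psi=\psi\circ\beta$. The structure constants $\gamma_{ij}^k,\delta_{ij}^k,\xi_{ij}^k$ and the entries $a_{ij},b_{ij}$ of $\alpha,\beta$ are read off from the multiplication tables, so the centroid equations reduce to a homogeneous linear system in the nine unknowns $c_{ji}$.

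The key steps, carried out once per algebra, are as follows. First I would impose $\psi\alpha=\alpha\psi$ and $\psi\beta=\beta\psi$; since in most of the listed algebras $\alpha$ and $\beta$ act very sparsely (often sending a single basis vector to $e_1$, $e_2$, or $e_3$, or fixing it), these commutation relations immediately force many off-diagonal $c_{ji}$ to vanish and link several diagonal entries. Second, for each nonzero product $e_i\bullet e_j$ appearing in the table I would write out the two centroid identities $\psi(e_i)\bullet\alpha\beta(e_j)=\alpha\beta(e_i)\bullet\psi(e_j)$ and $\psi(e_i)\bullet\alpha\beta(e_j)=\psi(e_i\bullet e_j)$, using the product table to expand both sides in the basis and equating coefficients. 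This produces additional scalar constraints among the surviving $c_{ji}$. Third, I would solve the resulting reduced linear system and record the general solution matrix together with its dimension, exactly as the derivation computation in the proof of Theorem~\ref{dtheo2} was handled; as there, I would present one representative case in full and assert that the remaining cases follow by the same routine.

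The main obstacle is purely organizational rather than conceptual: there is no single hard deduction, but the computation must be repeated for all twenty-four algebras, and the bookkeeping of which coefficient equation comes from which of the three products $\dashv,\vdash,\bot$ is error-prone, especially in the algebras (such as $\mathcal{BT}as_3^{16}$ through $\mathcal{BT}as_3^{24}$) where products land in $e_1+e_3$ and $\alpha,\beta$ are non-diagonal. In those cases the interaction between the non-diagonal $\alpha,\beta$ and the twisted centroid condition $\psi(x)\bullet\alpha\beta(y)=\alpha\beta(x)\bullet\psi(y)$ couples the unknowns nontrivially, so I would take particular care to expand $\alpha\beta(e_j)$ correctly before equating coefficients. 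I expect the final centroid matrices to be low-dimensional (typically one- or two-parameter families, mirroring the derivation dimensions already tabulated), and I would display them in a table analogous to the derivation tables following Theorem~\ref{dtheo2}.
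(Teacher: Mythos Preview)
Your proposal is correct and follows essentially the same approach as the paper: the paper sets up exactly the linear system you describe (the commutation relations and the coefficient equations for each product) and then records the solution matrix and its dimension for each algebra in a table, with no further argument beyond the analogous derivation case. The only minor difference is that the paper does not isolate a worked example for the centroid theorem, whereas you plan to present one representative case in full; otherwise the methods coincide.
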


\begin{tabular}{||c||c||c||c||c||c||c||c||c||c||c||c||}
\hline
IC&$Cent(\mathcal{A})$ &$Dim(Cent(\mathcal{A}))$&IC&$Cent(\mathcal{A})$&$Dim(Cent(\mathcal{A}))$\\
			\hline
$\mathcal{BT}as_3^1$&
$\left(\begin{array}{cccc}
0&0&0\\
c_{21}&0&0\\
0&0&0
\end{array}
\right)$
&
1
&
$\mathcal{BT}as_3^{2}$&
$\left(\begin{array}{cccc}
0&0&0\\
c_{21}&0&0\\
0&0&0
\end{array}
\right)$
&
1
\\ \hline
$\mathcal{BT}as_3^3$&
$\left(\begin{array}{cccc}
0&0&0\\
0&0&0\\
0&0&c_{33}
\end{array}
\right)$
&
1
&
$\mathcal{BT}as_3^{4}$&
$\left(\begin{array}{cccc}
0&0&0\\
0&0&0\\
c_{31}&0&0
\end{array}
\right)$
&
1
\\ \hline
$\mathcal{BT}as_3^5$&
$\left(\begin{array}{cccc}
0&0&0\\
c_{21}&0&0\\
0&0&0
\end{array}
\right)$
&
1
&
$\mathcal{BT}as_3^{6}$&
$\left(\begin{array}{cccc}
0&0&0\\
c_{21}&0&0\\
0&0&0
\end{array}
\right)$
&
1
\\ \hline
$\mathcal{BT}as_3^7$&
$\left(\begin{array}{cccc}
0&0&0\\
0&0&0\\
0&0&c_{33}
\end{array}
\right)$
&
1
&
$\mathcal{BT}as_3^{8}$&
$\left(\begin{array}{cccc}
0&0&0\\
0&0&0\\
0&0&c_{33}
\end{array}
\right)$
&
1
\\ \hline
$\mathcal{BT}as_3^9$&
$\left(\begin{array}{cccc}
0&0&0\\
0&0&0\\
0&0&c_{33}
\end{array}
\right)$
&
1
&
$\mathcal{BT}as_3^{10}$&
$\left(\begin{array}{cccc}
0&0&0\\
0&0&0\\
0&0&c_{33}
\end{array}
\right)$
&
1
\\ \hline
$\mathcal{BT}as_3^{11}$&
$\left(\begin{array}{cccc}
0&0&0\\
0&c_{22}&0\\
0&0&0
\end{array}
\right)$
&
1
&
$\mathcal{BT}as_3^{12}$&
$\left(\begin{array}{cccc}
0&0&0\\
0&c_{22}&0\\
0&0&0
\end{array}
\right)$
&
1
\\ \hline
$\mathcal{BT}as_3^{13}$&
$\left(\begin{array}{cccc}
0&0&0\\
0&c_{22}&0\\
0&0&0
\end{array}
\right)$
&
1
&
$\mathcal{BT}as_3^{15}$&
$\left(\begin{array}{cccc}
c_{11}&0&0\\
0&0&0\\
0&0&0
\end{array}
\right)$
&
1
\\ \hline
$\mathcal{BT}as_3^{16}$&
$\left(\begin{array}{cccc}
0&0&0\\
0&0&0\\
c_{31}&0&0
\end{array}
\right)$
&
1
&
$\mathcal{BT}as_3^{18}$&
$\left(\begin{array}{cccc}
0&0&0\\
0&0&0\\
c_{31}&0&0
\end{array}
\right)$
&
1
\\ \hline
\end{tabular}

\begin{tabular}{||c||c||c||c||c||c||c||c||c||c||c||c||}
\hline
IC&$Cent(\mathcal{A})$ &$Dim(Cent(\mathcal{A}))$&IC&$Cent(\mathcal{A})$&$Dim(Cent(\mathcal{A}))$\\
			\hline
$\mathcal{BT}as_3^{19}$&
$\left(\begin{array}{cccc}
0&0&0\\
0&0&0\\
c_{13}&0&0
\end{array}
\right)$
&
1
&
$\mathcal{BT}as_3^{20}$&
$\left(\begin{array}{cccc}
0&0&0\\
0&0&0\\
c_{31}&0&0
\end{array}
\right)$
&
1
\\ \hline			
$\mathcal{BT}as_3^{21}$&
$\left(\begin{array}{cccc}
0&0&0\\
0&0&0\\
c_{31}&0&0
\end{array}
\right)$
&
1
&
$\mathcal{BT}as_3^{22}$&
$\left(\begin{array}{cccc}
0&0&0\\
0&0&0\\
c_{31}&0&0
\end{array}
\right)$
&
1
\\ \hline			
$\mathcal{BT}as_3^{23}$&
$\left(\begin{array}{cccc}
0&0&0\\
0&0&0\\
c_{31}&0&0
\end{array}
\right)$
&
1
&
$\mathcal{BT}as_3^{24}$&
$\left(\begin{array}{cccc}
0&0&0\\
0&0&0\\
c_{31}&0&0
\end{array}
\right)$
&
1.
\\ \hline
\end{tabular}

\begin{corollary}\,
\begin{itemize}
	\item The dimensions of the centroids of two-dimensional BiHom-associative trialgebras are zero.
	\item The dimensions of the centroids of three-dimensional BiHom-associative trialgebras range between zero and one.
\end{itemize}
\end{corollary}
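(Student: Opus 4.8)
The plan is to deduce the two stated dimension bounds from the classification results of Theorems \ref{the1} and \ref{the2} together with the explicit centroid computation that immediately precedes this corollary, after first reducing the problem to the classified representatives. The key preliminary observation is that $\dim Cent(\mathcal{A})$ is an invariant of the isomorphism class of a BiHom-associative trialgebra: if $\phi : \mathcal{A} \to \mathcal{A}'$ is an isomorphism, then $\psi \mapsto \phi \circ \psi \circ \phi^{-1}$ is a linear bijection from $Cent(\mathcal{A})$ onto $Cent(\mathcal{A}')$. Indeed, since $\phi$ intertwines $\alpha, \beta$ with $\alpha', \beta'$ and carries $\dashv, \vdash, \bot$ to $\dashv', \vdash', \bot'$, conjugating each defining identity of the centroid by $\phi$ sends a solution for $\mathcal{A}$ to a solution for $\mathcal{A}'$, and applying the same to $\phi^{-1}$ gives the converse. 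Hence it suffices to evaluate $\dim Cent$ on each representative $\mathcal{BT}as_2^m$, on each $\mathcal{BT}as_3^m$, and on the associative representatives.

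For the three-dimensional assertion, the immediately preceding theorem already tabulates $Cent(\mathcal{A})$ for every representative $\mathcal{BT}as_3^m$; in each displayed matrix the solution set of the centroid system is either $\{0\}$ or a single free scalar parameter $c_{ij}$, so $\dim Cent(\mathcal{A}) \in \{0,1\}$. The associative three-dimensional representatives, for which $\alpha = \beta = \mathrm{id}$, are handled by substituting their structure constants into the centroid system; this collapses to the classical centroid condition for a three-dimensional associative trialgebra and again yields $\dim \le 1$. Taking the extreme values over all classes produces exactly the range $[0,1]$.

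For the two-dimensional assertion I would carry out the analogous computation class by class, since these centroids are not tabulated. For each representative $\mathcal{BT}as_2^m$ I would substitute the structure constants $\gamma_{ij}^k, \delta_{ij}^k, \xi_{ij}^k$ and the matrices $(a_{ij}), (b_{ij})$ of $\alpha, \beta$ into the centroid system and solve the resulting homogeneous linear system in the entries $c_{ij}$ of $\psi$. The decisive feature of every two-dimensional class is that $\alpha\beta$ degenerates on the relevant basis vectors; for instance in $\mathcal{BT}as_2^1$ one has $\alpha(e_2)=\beta(e_2)=e_1$ and $\alpha(e_1)=\beta(e_1)=0$, whence $\alpha\beta \equiv 0$. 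The centroid identities $\psi(x)\dashv\alpha\beta(y)=\psi(x)\dashv\psi(y)=\alpha\beta(x)\dashv\psi(y)$ (and their $\vdash,\bot$ analogues) then force $\psi(x)\ast\psi(y)=0$ for all products $\ast$, which combined with $\alpha\circ\psi=\psi\circ\alpha$ and $\beta\circ\psi=\psi\circ\beta$ drives every $c_{ij}$ to zero. Thus $Cent(\mathcal{A})=\{0\}$ and $\dim Cent(\mathcal{A})=0$, with the associative two-dimensional representatives disposed of by the same substitution.

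The main obstacle is the two-dimensional part: unlike the three-dimensional case it cannot be read off an existing table, so the homogeneous systems must be produced and solved from scratch for all seven representatives and the associative ones, and one must verify uniformly that $\psi=0$ is the only solution. A secondary point requiring care is that the associative representatives are absent from the explicit lists of Theorems \ref{the1} and \ref{the2} and so must be checked separately to confirm they lie within the claimed ranges. Once these finite case verifications are complete, the corollary follows at once by taking the minimum and maximum of $\dim Cent$ over all isomorphism classes in each dimension.
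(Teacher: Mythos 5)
Your overall strategy---reduce to the classified representatives via isomorphism-invariance of the centroid, read the three-dimensional dimensions off the preceding table, and do the two-dimensional classes by hand---is exactly what the paper implicitly does; the three-dimensional half of your argument is fine, with the minor caveat that every matrix actually displayed in the table is one-dimensional, so the lower value $0$ in the claimed range must come from the classes the table omits (e.g.\ $\mathcal{BT}as_3^{14}$, $\mathcal{BT}as_3^{17}$), which you should say explicitly rather than asserting that the displayed solution sets are ``either $\{0\}$ or a single free parameter.''

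The genuine gap is in your two-dimensional argument. You claim that when $\alpha\beta\equiv 0$ the identities $\psi(x)\dashv\alpha\beta(y)=\psi(x)\dashv\psi(y)=\alpha\beta(x)\dashv\psi(y)$ (and the $\vdash,\bot$ analogues) force $\psi(x)\ast\psi(y)=0$, ``which drives every $c_{ij}$ to zero.'' The first half is right, but the implication to $\psi=0$ is a non sequitur, and in fact it fails: degeneracy of $\alpha\beta$ makes the centroid conditions \emph{weaker}, not stronger, because it deletes every constraint linking $\psi$ to the multiplication table and leaves only the quadratic condition $\psi(x)\ast\psi(y)=0$. Concretely, in $\mathcal{BT}as_2^1$ take $\psi(e_1)=0$ and $\psi(e_2)=c\,e_1$. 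Then $\psi$ commutes with $\alpha$ and $\beta$ (both sides vanish on each basis vector, since $\alpha(e_1)=\beta(e_1)=0$), one has $\alpha\beta=0$, and every product $\psi(x)\ast\psi(y)$ is a multiple of $e_1\ast e_1=0$; hence $\psi\in Cent_{(\alpha,\beta)}(\mathcal{A})$ for every scalar $c$, and the centroid of this representative is at least one-dimensional. So the case analysis you propose, carried out honestly, contradicts rather than proves the first bullet. The same problem infects your dismissal of the associative representatives: if $\alpha=\beta=\mathrm{id}$, the identity map itself satisfies all centroid identities, so those classes also have centroid dimension at least one. In short, the two-dimensional claim cannot be obtained by the computation you describe (and, with the paper's definition of $Cent_{(\alpha,\beta)}$, the literal statement is itself in doubt); a correct treatment would have to exhibit the full linear systems for all seven representatives and confront the nonzero solutions they admit, or else work with a different notion of centroid (of the classical type $\psi(x\ast y)=\psi(x)\ast y=x\ast\psi(y)$) under which the claimed vanishing actually holds.
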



\begin{thebibliography}{9}
\bibitem{AZAB2}
A.~ Makhlouf and Ahmed Zahari,  \emph{Structure and Classification of Hom-Associative Algebras}, Acta et commentationes universitis Tartuensis de mathematica, vol $24 (1),  (2020), 79-102$.

\bibitem{AI}
A. Zahari and I. Bakayoko,   \emph{On BiHom-Associative dialgebras}, Open J. Math. Sci. vol(7), $(2023), 96-117$.

\bibitem{BB1}
I. BAKAYOKO and M. BANGOURA, {\it Bimodules and Rota-Baxter relations},
J. Appl. Mech. Eng 4:178, doi:10.4172/2168-9873.1000178, 2015.

\bibitem{BB2}
I. BAKAYOKO and M. BANGOURA, {\it Left-Hom-symmetric and Hom-Poisson dialgebras}, Konuralp Journal of Mathematics, $\bf 3$ No.2, 42-53, 2015.

\bibitem{Ld} Loday J.-L., Frabetti A., Chapoton F., Goichot F., \emph{Dialgebras and Related operads}, lecture Notes in LMaths., $1763$  Springer, Berlin, $2001.$

\bibitem{BN} Benkanta G., Neherb E., \emph{The centroid of extented affine and root grated Lie algebras},  Journal of Pure and Applied Algebra, $204(1)$, (2006), $117-145.$

\bibitem{MD} Merville D. J, \emph{Centroids of nilpotent Lie algebras}, Comm. Algebra, $20(12), (1992), 3649-3682.$

\bibitem{RF} Rakhimov, I.S., Fiidov, M. A., \emph{Centroids of finite dimensional associative dialgebras}, Far East Journal of Mathematical Sciences, $98 (4), (2015), 427-443.$


\bibitem{MR} M.A. Fiidov, I.S.,Rakhimov, S.K. said Hussain, \emph{Centroids of Associative Algebras},IEEE Proceedings of International Conference on Reeseach and Education in Mathematics, (CIREM7),
 $(2015), 227-232.$

\bibitem{GM} G.Mozzola, \emph{The algebraic and geometric classification of associative algebras of five}, Manuscita Math., Vol. $27(1),  (1979)$ pp, $81-101,.$


\bibitem{BB1} Bakayoko I.,  Bangoura M., \emph{Bimodules and Rota-Baxter relations}, J. Appl. Mech. Eng 4:178, doi:10.4172/2168-9873.1000178, 2015.

\bibitem{BB2}  Bakayoko I., Bangoura M., \emph{Left-Hom-symmetric and Hom-Poisson dialgebras}, Konuralp Journal of Mathematics, $\bf 3$ No.2,
42-53, 2015.

\bibitem{MS}
A.~  Makhlouf and S.~ Silvestrov, \emph{Hom-algebra structures}, J. Gen. Lie Theory Appl. Vol.$2$ $(2008)$, No.$2,51$-$64$.

\bibitem{WR} W.~ Basri and Rokhsiboev \emph{On low dimensional diassociative algebras, Proceedings of Third conference on Research and Education in Mathematics},(ICREM3), UPM, Malaysia :
(2007), 164-170.

\bibitem{RRB1} Rikhsiboev I., Rakhimov I.,   Basri W.   \emph{Classification of 3-dimensional complex diassociative algebras}, Malaysian Journal of Mathematical Sciences, 4(2):241-254, 2010.

\bibitem{MFISW} W.~ Basri and Rokhsiboev \emph{On low dimensional diassociative algebras, Proceedings of Third conference on Research and Education in Mathematics},(ICREM3), UPM, Malaysia : $(2007) 164$-$170$.

\bibitem{Dim4Zhang}
A.~ Armour, H.~ Chen and Y.~ Zhang, \emph{Classification of $4$-dimensional superalgebras}, Comm. in Algebra vol$37, (2009), 3697$-$3728.$

\bibitem{MRW} M.~ Ikrom, Rokhsiboev, S.~ Isamiddin, Rakhimov and W.~ Basri \emph{Classification of $3$-dimensional Complex diassociative algebras}, Malaysian Journal of Mathematical Sciences
$4(2), (2010), 241-254$.

\bibitem{RSW} I.~M.~Rikhsiboev, I.~S.~ Rakhimov and W.~Basri \emph{Diassocitive algebras and their derivations}, J. phys : Conf. Ser $553, (2014), 012006$.


\bibitem{WRR}W.~ Basri,I.~S.~Rakhimov and I.~M.~Rokhsiboev, \emph{Four-Dimension Nilpotent Diassociative algebras}, J. Generalised Lie
Theory Appl. $28.$ doi : $10.4172/1736-4337.1000218.$

\bibitem{MY} A.~Makhlouf and Yau D.~ Rota-Baxter Hom-Lie admissible algebras, Communication in Algebra, $23(3), (2014), 12 31$-$12 57$.

\bibitem{SLL} I. S. Rakhimov, Al-Nashri, Al-Hossain, \emph{On Derivationss of low-dimensional complex Leibniz algebras}, JR Jornal of Algebra, Number Theory and Applications.
Vol. $21(1), (2011)$, pp. $68-81.$

\end{thebibliography}
\end{document}